\DeclareMathAlphabet{\mathpzc}{OT1}{pzc}{m}{it}
\newtheorem{thm}{Theorem}[section]
\newtheorem{lem}[thm]{Lemma}
\newtheorem{prop}[thm]{Proposition} 
\newtheorem{defn}[thm]{Definition}
\newtheorem{cor}[thm]{Corollary}
\newtheorem{rem}[thm]{Remark}
\newtheorem{ex}[thm]{Example}
\newtheorem{ques}[thm]{Question}
\newcommand{\p}{\mathpzc{p}}
\newcommand{\m}{\mathpzc{m}}
\newcommand{\bR}{\mathbb R}
\newcommand{\bQ}{\mathbb Q}
\newcommand{\bN}{\mathbb N}
\newcommand{\bC}{\mathbb C}
\newcommand{\lB}{\mathcal{B}}
\newcommand{\lF}{\mathcal{F}}
\newcommand{\lG}{\mathcal{G}}
\newcommand{\bG}{\mathbb{bG}}
\newcommand{\lH}{\mathcal{H}}
\newcommand{\bH}{\mathbb{H}}
\author{Nikhilesh Dasgupta$^{*}$ and Animesh Lahiri$^{*}$\\
	{\small{\it $^{*}$  MURTI Research Center, GITAM (Deemed to be University),}}\\
	{\small{\it NH 207, Nagadenehalli, Doddaballapura, }}\\
	{\small{\it Karnataka 562163, Bengaluru, India.}}\\
	{\small{\it E-mail: its.nikhilesh@gmail.com, 255alahiri@gmail.com }}}
\title{\bf On image ideals of nice and quasi-nice derivations on $R[X_1,X_2]$}
\begin{document}
\date{}
\maketitle

\abstract{\noindent

	\noindent
	In this paper, we investigate the structure of image ideals associated with irreducible nice and quasi-nice $R$-derivations on $R[X_1, X_2]$, where $R$ is a finitely generated UFD  over a field of characteristic zero. For nice derivations $D$ (i.e., $D^2X_1 = D^2X_2 = 0$), we show that the $j$-th image ideal of $D$ is $(DX_1, DX_2)^j \ker(D)$ for all $j\geqslant 1$. In the more intricate case of strictly $1$-quasi-nice derivations (i.e., $D^2X_1=0$), we provide precise structure for image ideals using weighted degrees and divisibility data from the coefficients of $DX_2$, provided $DX_1$ is an irreducible element of $R$. Moreover, if $DX_1$ and the leading coefficient of $DX_2$ is comaximal in $R$, then the $j$-th image ideal of $D$ is generated by $(DX_1)^j$.  When $R$ is a PID and $DX_1$ factors into primes, we describe the generators of each image ideal in terms of these primes and their relation to the polynomial coefficients of $DX_2$. These results give a unified and computable framework for understanding how derivations shape the algebraic structure of their image ideals.}

\smallskip
\noindent
{\small {{\bf Keywords}. Locally nilpotent derivation, image ideal, polynomial ring, filtration.}
	
	\smallskip
	\noindent
	{\small {{\bf 2022 MSC}. Primary: 14R20; Secondary: 13A50.}}

	\section{Introduction}
	We will assume all rings to be commutative containing unity. By $k$, we will always denote a field of characteristic zero. The set of all non-negative integers will be denoted by $\bN$. The notation  $R^{[n]}$ will be used to denote an $R$-algebra isomorphic to a polynomial algebra in $n$ variables over $R$. Unless otherwise stated, capital letters like $X_1,\dots,X_n,Z_1,\dots,Z_m,X,Y,Z,U,V,W$ will be used as variables in the polynomial ring. For a ring $R$, by an affine $R$-algebra $B$ we mean that $B$ is a finitely generated algebra over $R$. 
	
	\smallskip
	\noindent
	Let $R$ be a $k$-algebra and $B$ an $R$-algebra. A $k$-linear map $D:B\longrightarrow B$ is said to be a {\it derivation}, if it satisfies the Leibnitz rule: $D(ab)=aDb+bDa,~\forall a,b \in B$. A derivation $D$ is said to be a {\it locally nilpotent derivation} (abbrev. lnd) if, for each $b\in{B}$, there exists $n(b)\in{\bN}$ such that $D^{n(b)}b=0$. The set of all locally nilpotent derivations on $B$ will be denoted by ${\rm LND}(B)$. An $R$-derivation $D:B\longrightarrow B$ is an $R$-linear derivation. The set of all locally nilpotent $R$-derivations (abbrev. $R$-lnd) on $B$ will be denoted by ${\rm LND}_R(B)$. The {\it kernel} of $D$, denoted by ${\rm Ker}(D)$, is defined to be the set $\{b \in B~\vert~Db=0\}$.
	The {\it $j$-th image ideal} of $D$ is the ideal of ${\rm Ker}(D)~(:=A)$, defined by 
	$$ I_j=A \cap D^{j}B\text{ for } j\geqslant 1.$$
	The $1$st image ideal $I_1$ is called the {\it plinth ideal} of $D$. An important problem in the area of locally nilpotent derivations is to study the minimal number of generators of image ideals $I_j$. When $B=k^{[3]}$, it is well-known that the plinth ideal $I_1$ is principal (\cite[Theorem 5.12]{F}, \cite[Theorem 1]{DK}). Freudenburg conjectured that all the image ideals are also principal for $D \in {\rm LND}_k(k^{[3]})$ (\cite[11.2]{F}). This conjecture is called the {\it Freeness Conjecture}.
	
	
	\medskip
	\indent
	In this paper, we consider affine $k$-algebras $R$ which are UFDs and  investigate the image ideals of irreducible nice and quasi-nice $R$-derivations (see Definition \ref{niceqnice}) on $R^{[2]}$. Nice and quasi-nice derivations form a natural intermediate class between
general locally nilpotent derivations and the more restrictive \emph{triangular}
derivations. Recall that a derivation $D$ on $B = R^{[n]}$ is triangular if,
in some coordinate system, $DX_1\in R$ and  $DX_i \in R[X_1, \ldots, X_{i-1}]$ for each $i\geqslant 2$.
By a classical theorem of Rentschler (\cite{R}), every $R$-lnd on $R[X,Y]$ is
triangularizable when $R$ is a field, but this fails in dimension
three. The condition $D^2 X_i = 0$ defining a nice (or quasi-nice)
derivation is precisely says that $DX_i$ lie in $\mathrm{Ker}(D)$,
which is considerably weaker than  triangularity yet still strong
enough to describe the structure of ${\rm Ker}(D)$. Such
derivations arise naturally in the study of $\mathbb{G}_a$-actions on affine
spaces that fail to be triangularizable or even locally trivial: for
instance, the Deveney--Finston example of a proper, non-locally-trivial
$\mathbb{G}_a$-action on $\mathbb{C}^5$ (\cite{DeF}) is built
from a derivation of this type.

\smallskip\noindent The kernel of an irreducible $R$-lnd on $R[X,Y]$ was completely described
by Bhatwadekar and Dutta when $R$ is a Noetherian domain containing $k$
(\cite{BD97}, see Theorem~\ref{BhD} below), and this
description was sharpened by Wang (\cite{W}) in the nice and quasi-nice
case when $R$ is a UFD. In fact he showed that $\mathrm{Ker}(D)$ is again a polynomial ring
$R^{[1]}$, generated by a linear polynomial in the nice case or  by a polynomial which is linear in one of the variables in the quasi-nice case (see Lemma~\ref{Wang}). In three
variables, the first author and Gupta (\cite{DG}) showed that the
kernel of an irreducible nice $R$-lnd on $R[X,Y,Z]$, for $R$ a PID, is a
polynomial ring $R^{[2]}$ containing a coordinate in $R[X,Y,Z]$ (see Theorem~\ref{Nice}). These
results give a fairly complete picture of the kernels of nice and
quasi-nice derivations, but say little about finer ideal-theoretic
invariants such as the image ideals $I_j$, which are studied in the present
paper.

\medskip\noindent
For any $k$-domain $B$ and an lnd $D$ having a slice (i.e., $\exists~s\in B$ such that $Ds=1$), it is easy to observe (see Lemma \ref{ii_slice}) that all the image ideals are equal to the whole ring and hence principal. In fact, if $R$ is an affine $k$-domain, then any fixed point free $R$-lnd $D$ (i.e., $(DB)B=B$) on $B=R^{[2]}$  has a slice in $B$ and hence all the image ideals are principal (Corollary \ref{2varufd}). So, it is interesting to investigate the situation when $D$ is not fixed point free. The main results proved in this paper are Theorem \ref{inice}, Theorem \ref{2varquasi} and Theorem \ref{plinth_quasi_PID}.
	
	\medskip\noindent
	{\bf Theorem \ref{inice}} {\it Let $R$ be a {\rm UFD} and $B=R[X_1,X_2]$. Let $D$ be an irreducible $R$-lnd on $B$ such that $D^2X_1=D^2X_2=0$. Let $A={\rm Ker}(D)$. Then, for each $j\in \bN, I_j ={( DX_1,DX_2)}^jA$.}
	
	\medskip\noindent
	{\bf Theorem \ref{2varquasi}.} {\it Let $R$ be a {\rm UFD} and $B=R[X_1,X_2]$. Let $D$ be an irreducible $R$-lnd on $B$ which is not fixed point free. Let $A={\rm Ker}(D)$. Suppose that the following conditions hold.}
	\begin{enumerate}
\item[\rm(i)]$DX_1~(=b)\in R$ is irreducible,
\item[\rm(ii)]$DX_2=-f^{\prime}(X_1)$ where $f(X_1)=\overset{m}{\underset{i=0}{\sum}} f_i{X_1}^i$, $f_m\neq 0$,
\item[\rm(iii)]$D$ is strictly $1$-quasi-nice. 
\end{enumerate}
Let $d={\rm Max}\{i\in\bN:i\leqslant m \text{ and } b\nmid f_i\}$. Then, $d\geqslant 2$ and for each $j\geqslant 2$, $I_j=b^{m_j}\bar{I_j}$, where $m_j={\rm Min}\{i_1+(d-1)i_2:i_1,i_2\in\bN,i_1+di_2=j\}$, $\bar{I_j}={(b,f_d)}^{l_j}A$ and $l_j=[j/d].$

\medskip\noindent
Recently, Khaddah, Kahoui and Ouali showed that if $R$ is a PID containing $k$, $B=R^{[2]}$ and $D\in{\rm LND}_{R}(B)$, then all the image ideals of $D$ are principal (\cite[Theorem 4.1]{KKO22}) and as a consequence the freeness conjecture of Freudenburg is true for locally nilpotent derivations on $k^{[3]}$ having atleast one coordinate in ${\rm Ker}(D)$. However, there has not been much study about how a generator for each of the image ideals would look like. For a PID $R$ and an irreducible quasi-nice $R$-lnd $D$ on $R^{[2]}$, we have the following result.

%
%

\medskip\noindent
{\bf Theorem \ref{plinth_quasi_PID}.} {\it Let $R$ be a {\rm PID}, $B=R[X_1,X_2]$ and $D$ an irreducible $R$-lnd on $B$ satisfying the following conditions.

\begin{enumerate}
\item [\rm(i)] $DX_1=\underset{i\in I}\prod{p_i}^{r_i}~(\in R)$, where $I=\{1,2,\dots,n\}$ and for each $i\in I$, $p_i$ is a prime element in $R$.
\item[\rm(ii)] $DX_2=-f^{\prime}(X_1)$, where $f(X_1)=\overset{d}{\underset{j=0}{\sum}} f_jX^j, f_d\neq 0$,
\end{enumerate}
Let $A={\rm Ker}(D)$ and  $J=\{i\in I:p_i\mid f_j \text{ for all }j=2,\dots,d \text{ and }p_i\nmid f_1\}$. Then the following hold.

\begin{enumerate}
\item[\rm(I)] $I_1=
\begin{cases}
	({\underset{i\in I\setminus J}\prod}{p_i}^{r_i})A, & \text{if } J\neq I;\\
	A, & \text{if } J=I.
\end{cases}
$
\item[\rm(II)]  If for each $i\in I$, $p_i\nmid f_d$, then for each integer $j\geqslant 1$, $I_j=(DX_1)^{m_j}A$, where $m_j={\rm Min}\{i_1+(d-1)i_2:i_1,i_2\in\bN,i_1+di_2=j\}$. 

\end{enumerate}}

\medskip\indent
Corollary \ref{primality} and Proposition \ref{top degree ideal} are the essential tools used to study the higher image ideals in case of both nice and quasi-nice derivations. Corollary \ref{primality} gives an explicit set of generators for each $I_n$ under the primality of an ideal $\widetilde{J}$, and Proposition \ref{top degree ideal} is used to find the generators of $\widetilde{J}$. In fact, given an ideal $I$ in a polynomial ring over an affine $k$-domain, equipped with a weighted degree map, using Proposition \ref{top degree ideal} one can compute the ideal generated by the top degree terms of elements of $I$, provided at most one of the generators of $I$ is non-homogeneous. The proof of Corollary \ref{primality} uses some techniques of LND-filtration introduced by B. Alhajjar in \cite{A}.

\section{Preliminaries}

First, we will recall some useful definitions.
\begin{defn}\label{irreducible lnd}
{\em Let $R$ be a $k$-algebra and $B$ an $R$-algebra. An $R$-derivation $D$ on $B$ is said to be {\it irreducible} if there does not exist any non-unit $b$ in $B$ such that $DB\subseteq bB$.}
\end{defn}
\begin{defn}\label{niceqnice}
{\em Let $R$ be a $k$-domain, $B=R^{[n]}$ and $m ~(\leqslant n)$ be a positive integer. An $R$-lnd $D$ on $B$ is said to be {\it m-quasi-nice} (or simply {\it quasi-nice}) if there exists a coordinate system $X_1,X_2, \dots ,X_n$ in $B$ such that $D^{2}(X_i)=0$ for all 
	$i \in \{1,\dots,m\}$. If $m=n$, then $D$ is called 
	{\it nice}.}
	\end{defn}
	
	\begin{defn}\label{strictquasi}
{\em An $m$-quasi-nice derivation is said to be {\it strictly $m$-quasi-nice} if it is not $r$-quasi-nice for any positive 
	integer $r > m$. 
}
\end{defn}

\begin{defn}
\emph {Let $B$ be a ring. A family of additive subgroups $\{\mathcal{B}_i\}_{i\in\bN}$ of $B$ is said to be an {\it $\bN$-filtration} of $B$ if the following conditions hold.
	\begin{enumerate}
		\item[\rm(i)] $\lB_i\subseteq\lB_{i+1}$ for each $i\in\bN$,  
		\item[\rm(ii)] $\underset{{i\in\bN}}\bigcup\mathcal{B}_i=B$,
		\item[\rm(iii)] $\mathcal{B}_i\mathcal{B}_j\subseteq{\mathcal{B}_{i+j}}$ for all $i,j\in\bN$.
	\end{enumerate}
	\noindent
	An $\bN$-filtration $\{\mathcal{B}_i\}_{i\in\bN}$ of $B$ is said to be {\it proper} if the following additional conditions hold.
	\begin{enumerate}
		\item[\rm(iv)]$\bigcap_{i\in\bN}\lB_i=\{0\}$,
		\item[\rm(v)]$a\in\lB_i\setminus\lB_{i-1}$, $b\in\lB_j\setminus\lB_{j-1}$ will imply $ab\in\lB_{i+j}\setminus\lB_{i+j-1}$.
\end{enumerate}}
\end{defn}

\begin{defn}

\emph {Let $B$ be a ring. A map $\theta:B\longrightarrow \bN\cup\{-\infty\}$ is said to be an {\it $\bN$-semi-degree map} on $B$ if the following conditions hold.
	\begin{enumerate}
		\item[\rm(i)] $\theta(a)=-\infty$ if and only if $a=0$,
		\item[\rm(ii)] $\theta(ab)\leqslant\theta(a)+\theta(b)$ for all $a,b\in B,$
		\item[\rm(iii)] $\theta(a+b)\leqslant{\rm Max}~\{\theta(a),\theta(b)\}$ for all $a,b\in B.$
	\end{enumerate}
	\noindent
	An $\bN$-semi-degree map $\theta$ is called a {\it degree map} if in condition (ii) equality occurs for all $a,b\in B$.}

\end{defn}
\noindent
There is a one-one correspondence between proper $\bN$-filtrations of $B$ and $\bN$-degree maps on $B$. In fact, if $\lB:=\{\lB_i\}_{i\in\bN}$ is a proper $\bN$-filtration of $B$, then it induces a degree map $\theta_{\lB}$ on $B$ defined by $\theta_\lB(a)=
\begin{cases}
-\infty,& \text{if }a=0,\\
n,& \text{if }a\in{\lB_n\setminus\lB_{n-1}}.
\end{cases}
$ \\Conversely, if $\theta$ is a degree map on $B$, then $B=\underset{i\in\bN}\bigcup \lB_i$, where $\lB_i=\{a\in B:\theta(a)\leqslant i\}$. It is easy to check that $\{\lB_i\}_{i\in\bN}$ is a proper $\bN$-filtration of $B$.

\begin{defn}
\emph {Let $R$ be a ring and $B=R^{[n]}$. A real-valued degree map $\theta$ on $B$ is said to be a {\it weighted degree map} on $B$, if for any $p\in B$, $$\theta(p)={\rm Max}\{\theta(m):m\in M(p)\},$$ where $M(p)$ is the set of all monomials occurring in the expression of $p$.
}

\smallskip\noindent
\emph{If $M'(p):=\{m\in M(p):\theta(m)=\theta(p)\}$, 
	then we will denote $\sum \limits_{m\in{M'(P)}} m$ by by $\widetilde{p}$. For an ideal $I$ of $B$, $\widetilde{I}$ will denote the ideal of $B$ generated by $\{\widetilde{p}:p\in I\}$.
}
\end{defn}

\begin{defn}
\emph{Let $R$ be a $k$-algebra, $B$ an $R$-algebra, $D\in{\rm LND}_{R}(B)$. For $f\in B$, we define  ${\rm deg}_D(f):=\begin{cases}
-\infty, & \text{ if }f=0;\\
   {\rm Min}\{n\in \bN:D^{n+1}(f)=0\}, & \text{ if }f\neq 0. 
\end{cases}
$}
\end{defn}

\smallskip
Now, we will record some elementary facts and some important results  which will be used throughout the rest of the paper. First, we state and prove an elementary result which follows from the higher product rule for a derivation (\cite[Proposition 1.6]{F}).
\begin{lem}\label{prule}
Let $R$ be a $k$-algebra, $B$ an $R$-algebra, $D\in{\rm LND}_{R}(B)$ and $f_1,\dots,f_n\in B$ with ${\rm deg}_D(f_i)=m_i$. If $m=\sum_{i=1}^{n}m_i$, then we have the following:
\begin{enumerate}
	\item[\rm(i)] $D^m(f_1\dots f_n)=\dfrac{m!}{\prod_{i=1}^{n}m_i!}\prod_{i=1}^{n}D^{m_i}f_i$,
	\item[\rm(ii)]$D^{m+t}(f_1\dots f_n)=0$, where $t\geqslant 1$. In particular, ${\rm deg}_D(f_1\dots f_n)=m$.
\end{enumerate}
\end{lem}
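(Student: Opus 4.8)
The plan is to apply the higher product rule of \cite[Proposition 1.6]{F} in its multinomial form,
$$D^{N}(f_{1}\cdots f_{n})=\sum_{\substack{(i_{1},\dots,i_{n})\in\bN^{n}\\ i_{1}+\cdots+i_{n}=N}}\frac{N!}{i_{1}!\cdots i_{n}!}\,D^{i_{1}}f_{1}\cdots D^{i_{n}}f_{n},$$
and then to read off (i) and (ii) by a counting argument on the exponents. (If \cite[Proposition 1.6]{F} is recorded only for two factors, this multinomial identity follows at once by induction on $n$, the inductive step being the two-factor rule applied to $(f_{1}\cdots f_{n-1})\cdot f_{n}$ together with part (ii) for $n-1$ factors; so I will take the displayed identity as the starting point.)

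The key observation is that, since ${\rm deg}_{D}(f_{j})=m_{j}$, we have $D^{i_{j}}f_{j}=0$ whenever $i_{j}>m_{j}$; hence in the sum above only the tuples with $i_{j}\leqslant m_{j}$ for all $j$ contribute. First take $N=m=\sum_{j}m_{j}$: a contributing tuple satisfies $i_{j}\leqslant m_{j}$ for every $j$ and $\sum_{j}i_{j}=\sum_{j}m_{j}$, which forces $i_{j}=m_{j}$ for all $j$. Thus exactly one summand survives, giving $D^{m}(f_{1}\cdots f_{n})=\frac{m!}{\prod_{j}m_{j}!}\prod_{j}D^{m_{j}}f_{j}$, which is (i). Next take $N=m+t$ with $t\geqslant 1$: if a tuple had $i_{j}\leqslant m_{j}$ for all $j$, then $\sum_{j}i_{j}\leqslant m<m+t=N$, a contradiction; so every summand vanishes and $D^{m+t}(f_{1}\cdots f_{n})=0$, which is the first part of (ii).

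For the remaining ``in particular'' claim, ${\rm deg}_{D}(f_{1}\cdots f_{n})=m$, it suffices to show $D^{m}(f_{1}\cdots f_{n})\neq 0$. By (i) this element equals $\frac{m!}{\prod_{j}m_{j}!}\prod_{j}D^{m_{j}}f_{j}$; the scalar $\frac{m!}{\prod_{j}m_{j}!}$ is a nonzero integer because $k$ has characteristic zero, each $D^{m_{j}}f_{j}$ is nonzero by definition of ${\rm deg}_{D}$, and each $D^{m_{j}}f_{j}$ lies in ${\rm Ker}(D)$ since $D(D^{m_{j}}f_{j})=D^{m_{j}+1}f_{j}=0$. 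As $B$ (hence ${\rm Ker}(D)$) is an integral domain in the situations of interest, this product is nonzero. The argument is essentially bookkeeping with multinomial coefficients; the only point that uses anything beyond the product rule is this last non-vanishing step, which relies on $\operatorname{char} k=0$ and on the absence of zero divisors in $B$.
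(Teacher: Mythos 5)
Your proof is correct and takes essentially the same route as the paper's: the higher product rule of \cite[Proposition 1.6]{F} in multinomial form, combined with the observation that $D^{i_j}f_j=0$ exactly when $i_j>m_j$, so that only the tuple $(m_1,\dots,m_n)$ survives for $N=m$ and nothing survives for $N>m$. You are in fact slightly more careful than the paper, which does not address the non-vanishing of $D^m(f_1\cdots f_n)$ needed for the ``in particular'' clause of (ii) --- a point that, as you note, uses $\operatorname{char}k=0$ and the absence of zero divisors in $B$ (satisfied in every application in the paper, where $B$ is a polynomial ring over a domain).
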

\begin{proof} Proof of (i) follows from the higher product rule, $D^m(fg)=\sum\limits_{i+j=m} {m\choose i}D^{i}fD^{j}g$. The proof of (ii) follows from the fact that for each $i \in \{1,\dots ,n \}$, $D^lf_i=0$ if and only if $l > m_i$.
\end{proof}

\smallskip
\indent
Next, we state an observation describing the image ideals of a locally nilpotent derivation with a slice.
\begin{lem}\label{ii_slice}
Let $R$ be a $k$-algebra, $B$ an $R$-algebra, $D \in{\rm LND}_R(B)$ and $A={\rm Ker}(D)$. Then the following are equivalent.
\begin{enumerate}
	\item[\rm(i)]$D$ has a slice.
	\item[\rm(ii)]$I_1=A$.
	\item[\rm(iii)]$I_n=A$ for all $n \in \bN$.
	\item[\rm(iv)] $I_n=A$ for some $n\in\bN$.
\end{enumerate} 
\end{lem}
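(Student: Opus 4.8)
The implications $\text{(iii)} \Rightarrow \text{(iv)}$ and $\text{(iii)} \Rightarrow \text{(ii)}$ are trivial (take $n=1$), and $\text{(ii)} \Rightarrow \text{(iv)}$ is trivial as well, so the plan is to close the cycle by proving $\text{(i)} \Rightarrow \text{(iii)}$ and $\text{(iv)} \Rightarrow \text{(i)}$. For $\text{(i)} \Rightarrow \text{(iii)}$: suppose $s \in B$ with $Ds = 1$. Then for any $a \in A$ and any $n$, the element $a s^n / n!$ lies in $B$ and a direct computation using Lemma~\ref{prule}(i) (with $f_1 = a$, $f_i = s$ for $i \geqslant 2$, so $\deg_D = n$) gives $D^n\!\left(\tfrac{a s^n}{n!}\right) = a$. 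Hence $a \in A \cap D^n B = I_n$, so $A \subseteq I_n$; the reverse inclusion is immediate since $I_n \subseteq A$ by definition. Thus $I_n = A$ for all $n \geqslant 1$, and also $I_0 = A$ trivially, covering all $n \in \bN$.

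For $\text{(iv)} \Rightarrow \text{(i)}$: assume $I_n = A$ for some $n \geqslant 1$ (the case $n = 0$ is vacuous or reduces to $n\geqslant 1$). In particular $1 \in A = A \cap D^n B$, so there is $t \in B$ with $D^n t = 1$. Set $s = D^{n-1} t$. Then $Ds = D^n t = 1$, so $s$ is a slice. This is the short step.

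The only place that needs a little care is making sure the element $a s^n/n!$ actually lies in $B$ rather than in a localization — since $k$ has characteristic zero and $B$ is a $k$-algebra, $n!$ is invertible in $k \subseteq B$, so this is fine; I would state this explicitly. I do not anticipate a genuine obstacle here: the result is a standard packaging of the slice theorem, and the main content is simply organizing the four equivalent conditions so that the cheap implications and the two substantive ones ($\text{(i)} \Rightarrow \text{(iii)}$ via the explicit preimage, and $\text{(iv)} \Rightarrow \text{(i)}$ via $s = D^{n-1}t$) together close the loop $\text{(i)} \Rightarrow \text{(iii)} \Rightarrow \text{(ii)} \Rightarrow \text{(iv)} \Rightarrow \text{(i)}$.
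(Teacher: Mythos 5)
Your proposal is correct and follows essentially the same route as the paper: the substantive implication $(i)\Rightarrow(iii)$ is proved via the same element $s^n/n!$ (the paper shows $1=D^n(s^n/n!)\in I_n$ and uses that $I_n$ is an ideal of $A$, while you hit each $a\in A$ directly with $as^n/n!$), and the return to $(i)$ is the same observation that a preimage under $D^n$ of $1$ yields a slice after applying $D^{n-1}$. The only difference is the bookkeeping of which cheap implications close the cycle, which is immaterial.
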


\begin{proof} $I_1=A \Rightarrow D$ has a slice (say $s$) $\Rightarrow I_n=A ~\forall n \in \bN$  (since $1=D^{n}(\frac{s^n}{n!})\in A \cap D^nB=I_n$) $\Rightarrow I_n=A$ for some $n\in\bN \Rightarrow I_1=A$.

%
\end{proof}

\indent
For a Noetherian $k$-domain $R$ and an irreducible $D \in{\rm LND}_R(R[X,Y])$, the structure of ${\rm Ker}(D)$   is given in \cite[Theorem 4.7]{BD97}.

\begin{thm}\label{BhD}
Let $R$ be a Noetherian domain containing $k$ and $B=R[X,Y]$. Let $D$ be an irreducible $R$-lnd on $B$ and $A={\rm Ker}(D)$. Then $A=R^{[1]}$ if and only if one of the following conditions hold.
\begin{enumerate}
	\item [\rm(i)] $DX$ and $DY$ form a regular sequence in $B$.
	\item [\rm(ii)] $(DX,DY)B=B$.
\end{enumerate}
Moreover, if {\rm(ii)} holds, then $B=A^{[1]}$.
\end{thm}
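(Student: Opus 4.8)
The statement to prove is Theorem~\ref{BhD}: for a Noetherian domain $R \supseteq k$, $B = R[X,Y]$, and an irreducible $R$-lnd $D$ on $B$ with kernel $A$, one has $A = R^{[1]}$ if and only if either (i) $DX, DY$ form a regular sequence in $B$, or (ii) $(DX,DY)B = B$; and in case (ii), $B = A^{[1]}$.

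The plan is to split the argument into the ``if'' and ``only if'' directions, and to isolate the two hypotheses. First I would handle the easy structural reductions. Since $D$ is irreducible, $\gcd(DX, DY) = 1$ in $B$ (any common non-unit factor would contradict irreducibility, because $DB = (DX, DY)B$ as $D$ is $R$-linear and $B = R[X,Y]$). So the image ideal $(DX,DY)B$ has height $\geq 2$ unless $DX, DY$ fail to be a regular sequence, and in a polynomial ring over a Noetherian domain the only height-$\geq 2$ situation that is \emph{not} a regular sequence after reordering is governed by associated primes; I would make this precise using that $B$ is Cohen--Macaulay (being a polynomial ring over a Noetherian domain, at least after localizing, or more carefully working prime-by-prime), so $\{DX, DY\}$ is a regular sequence iff $\operatorname{ht}(DX,DY)B = 2$. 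Thus condition (i) is equivalent to $\operatorname{ht}(DX,DY)B \geq 2$, and condition (ii) is the degenerate case $\operatorname{ht}(DX,DY)B = $ ``$\infty$'' (unit ideal). Together, (i) $\vee$ (ii) $\iff$ $\operatorname{ht}(DX,DY)B \neq 1$, i.e. $DX$ and $DY$ have no common prime divisor \emph{and} the reader should note $\gcd = 1$ already gives this; the real content is that $\operatorname{ht} \geq 2$ or unit.

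For the ``if'' direction I would proceed as follows. Pick $a \in A$ and write, using that $D$ annihilates $a$, the Jacobian-type identity $D(X) \cdot a_Y - D(Y) \cdot a_X = 0$ where $a_X, a_Y$ denote partial derivatives relative to the presentation $B = R[X,Y]$ (this is the standard observation that the kernel of $D = (DX)\partial_X + (DY)\partial_Y$ consists of first integrals). Hence $DX \mid DY \cdot a_X$ and $DY \mid DX \cdot a_Y$; since $\gcd(DX,DY)=1$, we get $DX \mid a_X$ and $DY \mid a_Y$, so $a_X = (DX) c$, $a_Y = (DY) c$ for a common $c \in B$ (the commonality of $c$ follows from comparing mixed partials, using again that $B$ is a domain and the $\gcd$ condition). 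This exhibits a \emph{preslice}-type element, and then one runs the classical argument: localize at the multiplicative set generated by a suitable element to produce a slice, conclude $B_{\text{loc}} = A_{\text{loc}}[s]$, and use transcendence-degree and finiteness considerations together with the hypothesis ($DX,DY$ regular sequence, or unit ideal) to descend to $A = R^{[1]}$. In case (ii), since the ideal is the whole ring, $D$ is fixed-point-free, a slice exists globally by the Slice Theorem (as quoted implicitly around Lemma~\ref{ii_slice} and Corollary~\ref{2varufd}), so $B = A[s] = A^{[1]}$, and $A = R^{[1]}$ because $\operatorname{tr.deg}$ and the freeness of $B$ over $R$ force it. For the ``only if'' direction, assume $A = R^{[1]}$, say $A = R[f]$; if $(DX,DY)B \neq B$, then there is a height-one prime or a maximal ideal containing it, and I would argue that if $\operatorname{ht}(DX,DY)B = 1$, writing $DX = p\,u$, $DY = p\,v$ with $p$ the common factor, contradicts irreducibility of $D$ directly. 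So $\operatorname{ht}(DX,DY)B \geq 2$, which (by the Cohen--Macaulay remark above) is exactly condition (i).

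The main obstacle I anticipate is the careful passage between ``$\gcd(DX,DY)=1$'' (a divisibility statement in the UFD-like part of $B$) and ``$DX,DY$ is a regular sequence'' (a depth statement), because $R$ is only assumed Noetherian, not a UFD, so $B = R[X,Y]$ need not be a UFD and the naive $\gcd$ argument must be replaced by a primary-decomposition / associated-primes argument in $B$; in particular one must rule out embedded-type phenomena where $(DX,DY)B$ has height $2$ yet $DY$ is a zerodivisor modulo $DX$. Handling this cleanly — presumably by localizing at each associated prime of $B/(DX)$ and using that a polynomial ring over a Noetherian domain is locally Cohen--Macaulay — is the technical heart, and it is exactly the point where one invokes the structure result of \cite{BD97}. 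Everything else (the Jacobian identity, the preslice-to-slice localization, and the descent in the fixed-point-free case) is routine locally nilpotent derivation bookkeeping.
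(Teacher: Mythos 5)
First, note that the paper does not prove this statement at all: it is quoted verbatim as \cite[Theorem 4.7]{BD97} (Bhatwadekar--Dutta), so there is no internal proof to compare against. Your proposal must therefore stand on its own, and as written it has genuine gaps. The central one is that the hard direction --- that (i) or (ii) forces $A=R^{[1]}$ --- is dispatched with ``one runs the classical argument: localize \dots use transcendence-degree and finiteness considerations \dots to descend to $A=R^{[1]}$.'' That sentence is the entire content of the theorem. Over a general Noetherian domain $R$ the kernel of an $R$-lnd on $R^{[2]}$ need not even be finitely generated a priori, and the actual proof in \cite{BD97} is a substantial argument (codimension-one behaviour of $A\hookrightarrow B$, faithful flatness in case (ii), and a separate analysis when $DX,DY$ form a regular sequence); none of that is recoverable from ``preslice-to-slice localization.''

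Several intermediate steps are also unsound in the stated generality. You repeatedly use $\gcd(DX,DY)=1$, but $B=R[X,Y]$ is not a UFD for general Noetherian $R$, and irreducibility of $D$ only says $(DX,DY)B$ lies in no proper principal ideal --- it does not produce greatest common divisors. The reduction ``regular sequence $\iff \operatorname{ht}(DX,DY)B\ge 2$'' invokes Cohen--Macaulayness of $B$, which fails when $R$ is not Cohen--Macaulay, and in the ``only if'' direction the step ``$\operatorname{ht}(DX,DY)B=1$ gives a common factor $p$, contradicting irreducibility'' requires the height-one prime to be principal, which again needs hypotheses you do not have. Finally, the Jacobian identity is misstated: for $a\in A$ one has $(DX)a_X+(DY)a_Y=0$, not $(DX)a_Y-(DY)a_X=0$, and the correct divisibility pairing is $DY\mid (DX)a_X$ and $DX\mid (DY)a_Y$, leading (where gcd's exist) to $a_X=(DY)c$, $a_Y=-(DX)c$, not to $a_X=(DX)c$, $a_Y=(DY)c$. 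The honest course here is to cite \cite{BD97} as the paper does, or to restrict to the UFD setting where the gcd-based arguments can be made rigorous.
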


\noindent
As an immediate corollary to Theorem \ref{BhD} we have the following result.
\begin{cor}\label{2varufd}
Let $R$ be a Noetherian domain containing $k$, $B=R^{[2]}$ and $D \in {\rm LND}_{R}(B)$ be fixed point free. Then $I_n=A$ for all $n \in \bN$.
\end{cor}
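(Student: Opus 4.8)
The plan is to show that a fixed point free $R$-lnd on $R^{[2]}$ must admit a slice, and then invoke Lemma~\ref{ii_slice}. First I would note that such a $D$ is automatically irreducible: if $DB\subseteq bB$ for some non-unit $b\in B$, then $(DB)B\subseteq bB\subsetneq B$, contradicting $(DB)B=B$. Writing $B=R[X,Y]$, since $D$ is an $R$-derivation we have $D=(DX)\partial_X+(DY)\partial_Y$, so $Db\in(DX,DY)B$ for every $b\in B$; hence $(DB)B=(DX,DY)B$, and the fixed point free hypothesis says precisely that $(DX,DY)B=B$. Now Theorem~\ref{BhD}(ii) applies and gives $A={\rm Ker}(D)=R^{[1]}$ and, more importantly, $B=A^{[1]}$.

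The key step is to upgrade ``$B=A^{[1]}$'' to ``$D$ has a slice''. Write $B=A[s]$ with $s$ transcendental over $A$. Since $R\subseteq A={\rm Ker}(D)$, the map $D$ is an $A$-derivation of $A[s]$, so $Db=(Ds)\,\partial b/\partial s$ for every $b\in B$. I would then check $Ds\in A$ by a degree argument: if $d:=\deg_s(Ds)\geqslant 1$ then, using that $B$ is a domain and $k$ has characteristic zero (so no leading-term cancellation occurs upon differentiating), one gets $\deg_s(D^{j+1}s)=d-1+\deg_s(D^j s)$, whence $\deg_s(D^j s)\geqslant 1$ and hence $D^j s\neq 0$ for all $j$, contradicting local nilpotence. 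Thus $c:=Ds$ lies in $A$; since $\partial/\partial s$ is surjective on $A[s]$, from $Db=c\,\partial b/\partial s$ we obtain $DB=cB$, hence $(DB)B=cB$. Fixed point freeness now forces $cB=B$, so $c$ is a unit of $B$, and since $Dc=0$ we also get $c^{-1}\in A$; therefore $D(c^{-1}s)=c^{-1}(Ds)=1$. Finally Lemma~\ref{ii_slice} yields $I_n=A$ for all $n\in\bN$.

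I do not anticipate a serious obstacle: the corollary is really just the combination of Theorem~\ref{BhD} and Lemma~\ref{ii_slice}. The one point needing a little care is the claim that $B=A^{[1]}$ forces a slice to exist; this is where local nilpotence enters, through the degree computation above, and the rest is routine bookkeeping.
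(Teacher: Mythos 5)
Your proposal is correct and follows the same route as the paper, which simply cites Theorem~\ref{BhD} and Lemma~\ref{ii_slice}: fixed point freeness gives $(DX,DY)B=B$, hence $B=A^{[1]}$, hence a slice, hence $I_n=A$. The details you supply (irreducibility of a fixed point free derivation, the identity $(DB)B=(DX,DY)B$, and the degree argument showing $Ds\in A$ so that $Ds$ is a unit and $c^{-1}s$ is a slice) are exactly the bookkeeping the paper leaves implicit, and they are all sound.
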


\begin{proof} The proof follows from Lemma \ref{ii_slice} and Theorem \ref{BhD}.
\end{proof}

\smallskip
\indent 
The following result of Z. Wang (\cite[Lemma 4.1 and Lemma 4.2]{W}) describes ${\rm Ker}(D)$ for an irreducible nice (or quasi-nice) $R$-lnd $D$ on $R[X,Y]$, when $R$ is a UFD containing $k$.  
\begin{lem}\label{Wang}
Let $R$ be a {\rm UFD} containing $k$, $B=R[X,Y]$ and $D$ an irreducible $R$-lnd. Then, the following hold:
\begin{enumerate}
	\item [\rm (i)] If $D^{2}X=0$, then ${\rm Ker}(D)=R[bY+f(X)]=R^{[1]}$, where $b \in R$ and $f(X) \in R[X]$. Moreover, $DX \in R$ and $DY \in R[X]$.
	\item [\rm (ii)] If $D^{2}X=D^{2}Y=0$, then $D=b\dfrac{\partial }{\partial X}-a\dfrac{\partial }{\partial Y}$ for some $a,b \in R$. Moreover, ${\rm Ker}(D)=R[aX+bY]$.
	\item [\rm (iii)] If $R$ is a {\rm PID} and $D^{2}X=D^{2}Y=0$, then $D$ has a slice.
\end{enumerate}
\end{lem}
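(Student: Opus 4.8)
\smallskip\noindent\textbf{Proof plan.}
For Part~(i), note first that $D^{2}X=0$ gives $DX\in A:={\rm Ker}(D)$. If $DX=0$, then $X\in A$, so $D$ is an $R[X]$-linear locally nilpotent derivation of $R[X][Y]$; local nilpotence then forces $DY\in R[X]$, and since $D\neq 0$ (as $D$ is irreducible) one checks that $A=R[X]$, which has the asserted form with $b=0$, $f(X)=X$. So assume $DX\neq 0$. My plan is to (1) show $A=R^{[1]}$, (2) show a generator of $A$ can be taken linear in $Y$, and (3) deduce $DX\in R$ and $DY\in R[X]$. For (1): irreducibility of $D$ forces $\gcd_{B}(DX,DY)=1$, for a common prime divisor would divide every element of $(DX,DY)B$, in particular $Db$ for all $b\in B$, contradicting irreducibility; since $B$ is a UFD and $DX\neq 0$, this makes $DX,DY$ either form a regular sequence in $B$ or satisfy $(DX,DY)B=B$, so Theorem~\ref{BhD} gives $A=R^{[1]}$, say $A=R[g]$ with $g$ algebraically independent over $R$.

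For (2), pass to the fraction field $K$ of $R$. Then ${\rm Ker}(D_{K})=A\otimes_{R}K=K[g]$, and by Rentschler's theorem ${\rm Ker}(D_{K})$ is generated over $K$ by a coordinate of $K[X,Y]$; as any two generators of a one-variable polynomial ring over $K$ differ by a $K$-affine substitution, $g$ itself is a coordinate. Write $K[X,Y]=K[g,G]$; then $D_{K}$ is $K[g]$-linear and locally nilpotent, so $D_{K}G=p(g)\in K[g]$, and writing $X=\sum_{i}c_{i}(g)G^{i}$ we get $D_{K}X=p(g)\sum_{i\ge 1}ic_{i}(g)G^{i-1}$. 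Here $p(g)\neq 0$ and $D_{K}X\neq 0$, while $D_{K}X\in{\rm Ker}(D_{K})=K[g]$ because $D_{K}^{2}X=0$; comparing $G$-degrees forces $\deg_{G}X=1$, i.e.\ $X=c_{1}(g)G+c_{0}(g)$, and since $X$ is a coordinate of $K[X,Y]$ linear in the complementary coordinate $G$, its leading coefficient $c_{1}(g)$ is a unit (the standard fact that a coordinate of $K^{[2]}$ which is linear in another coordinate has a unit as leading coefficient, seen e.g.\ by inspecting the fibres $X=t$). Hence $G\in K[g,X]$, so $\{g,X\}$ is a coordinate system of $K[X,Y]$, and therefore $g=\lambda Y+\mu(X)$ for some $\lambda\in K^{*}$ and $\mu\in K[X]$; comparing coefficients in $g\in R[X,Y]$ gives $\lambda\in R$ and $\mu\in R[X]$, so $A=R[bY+f(X)]$ with $b:=\lambda\in R$ and $f:=\mu\in R[X]$ --- and $b\neq 0$, since $b=0$ would give $A\subseteq R[X]$ and then $f'(X)DX=D(f(X))=0$ would force $DX=0$. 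For (3), from $D(bY+f(X))=0$ we get $b\,DY=-f'(X)DX$, so every prime factor $\pi$ of $DX$ in $B$ divides $b\,DY$; as $\gcd_{B}(DX,DY)=1$, $\pi\nmid DY$, hence $\pi\mid b$, so $\pi$ is an associate of a prime of $R$. Thus $DX\in R$, and then $DY=-b^{-1}(DX)f'(X)\in K[X]\cap R[X,Y]=R[X]$.

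Part~(ii) follows by applying Part~(i) to $X$ and (using $D^{2}Y=0$) to $Y$, which gives $DX,DY\in R$; hence $D=(DX)\partial_{X}+(DY)\partial_{Y}=b\,\partial_{X}-a\,\partial_{Y}$ with $b=DX$, $a=-DY$, and $\gcd_{R}(a,b)=1$ by irreducibility. Over $K$, a linear change of coordinates gives ${\rm Ker}(D_{K})=K[aX+bY]$, while $aX+bY$ is primitive in $R[X,Y]$ because $\gcd_{R}(a,b)=1$; grading $R[X,Y]$ by total degree in $X,Y$, a content argument then yields ${\rm Ker}(D)=K[aX+bY]\cap R[X,Y]=R[aX+bY]$. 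For Part~(iii), Part~(ii) gives $DX=b$, $DY=-a$ with $a,b\in R$ and $\gcd_{R}(a,b)=1$, so since $R$ is a PID there exist $p,q\in R$ with $pb-qa=1$, whence $D(pX+qY)=1$ and $pX+qY$ is a slice of $D$. The main obstacle is step~(2) of Part~(i) --- producing a $Y$-linear generator of ${\rm Ker}(D)$ --- where Rentschler's theorem and the hypothesis $D^{2}X=0$ are combined; once the form $A=R[bY+f(X)]$ is available, the remaining steps are routine $\gcd$/content computations (and, for~(iii), Bezout in the PID $R$).
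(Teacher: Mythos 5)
The paper offers no proof of this lemma at all: it is imported wholesale from Z.~Wang \cite[Lemma 4.2]{W}, so there is no in-paper argument to compare against, and what you have written is a self-contained reconstruction. It is essentially correct and follows a sensible route: for (i) you use irreducibility to get $\gcd_B(DX,DY)=1$, pass to $K=\mathrm{Frac}(R)$ where Rentschler's theorem makes the kernel $K[g]$ for a coordinate $g$, and then exploit $D^2X=0$ (so that $D_KX\in K[g]$) to force $X$ to be linear in the coordinate complementary to $g$, whence $g=\lambda Y+\mu(X)$; the descent of $\lambda,\mu$ to $R$, the deduction $DX\in R$ and $DY\in R[X]$ from $b\,DY=-f'(X)DX$ together with $\gcd_B(DX,DY)=1$, and parts (ii)--(iii) (content argument plus B\'ezout) are all correct. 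Your terse appeal to ``inspecting the fibres'' to see that $c_1(g)\in K^{*}$ is legitimate; the one-line version is that if $c_1(\lambda)=0$ for some $\lambda\in\overline K$ then $X-c_0(\lambda)=(g-\lambda)\bigl(\tfrac{c_1(g)}{g-\lambda}G+\tfrac{c_0(g)-c_0(\lambda)}{g-\lambda}\bigr)$ is reducible in $\overline K[g,G]$, contradicting the irreducibility of a translate of a coordinate.

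The one step you should repair is the appeal to Theorem~\ref{BhD} to obtain $A=R^{[1]}$: that theorem is stated (and proved in \cite{BD97}) for \emph{Noetherian} domains, whereas the lemma assumes only that $R$ is a UFD containing $k$, and a UFD need not be Noetherian. This is harmless for every use of the lemma in this paper ($R$ affine over $k$ or a PID), but it is a genuine mismatch for the statement as written. Two fixes: quote instead the Daigle--Freudenburg kernel theorem for locally nilpotent $R$-derivations of $R[X,Y]$ over a UFD \cite{DF}, or bypass the input $A=R^{[1]}$ altogether by running your fraction-field computation first and then showing directly, via a content argument on powers of a primitive $R[X,Y]$-representative of $\lambda Y+\mu(X)$, that $K[\lambda Y+\mu(X)]\cap R[X,Y]=R[bY+f(X)]$ for suitable $b\in R$ and $f\in R[X]$ (exactly the Gauss-lemma computation you already carry out in part (ii)).
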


\smallskip
\indent
The following result (\cite[Theorem 3.6]{DG}) describes the structure of the kernels of nice derivations on $R^{[3]}$, where $R$ is a PID containing $k$ .

\begin{thm}\label{Nice}
Let $R$ be a {\rm PID} containing $k$ and $B=R[X,Y,Z]$.
Let $D$ be an irreducible $R$-lnd on $B$ with $D^{2}X=D^{2}Y=D^{2}Z=0$. Let $A={\rm Ker}(D)$. 
Then there exists a coordinate system $(U,V,W)$ in $B$ related to $(X,Y,Z)$ by a linear change such that the following hold:
\begin{enumerate}
	\item [\rm (i)] $A$ contains a non-zero linear form of $\{X,Y,Z\}$.
	\item [\rm (ii)] ${\rm rank}(D)\leq 2$. In particular, $A=R^{[2]}$.
	\item [\rm (iii)] $A=R[U,gV-fW]$, where $DV=f$, $DW=g$ and $f,g\in R[U]$ be such that ${\rm gcd}_{R[U]}(f,g)=1$.
	\item [\rm (iv)] Either $f$ and $g$ are comaximal in $B$ or they form a regular sequence in $B$. Moreover, if they are 
	comaximal, then $B=A^{[1]}$ and ${\rm rank}(D)=1$; and if they form a regular sequence, then $B$ is not $A$-flat and ${\rm rank}(D)=2$. 
\end{enumerate}
\end{thm}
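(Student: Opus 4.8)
The plan is to reduce the statement to the two–variable results Lemma~\ref{Wang}(ii) and Theorem~\ref{BhD} once a variable of $B$ lying in $A$ has been produced; producing such a variable is the real content. Write $a=DX$, $b=DY$, $c=DZ$, so that $D=a\,\partial_X+b\,\partial_Y+c\,\partial_Z$. The hypothesis $D^{2}X=D^{2}Y=D^{2}Z=0$ says precisely that $a,b,c\in A$, and since every element of $DB$ lies in $(a,b,c)B$ by the Leibnitz rule, irreducibility of $D$ forces $\gcd_{B}(a,b,c)$ to be a unit. As $A$ is factorially closed in $B$, common divisors in $B$ of elements of $A$ already lie in $A$; combined with the obvious identities $D(aY-bX)=D(aZ-cX)=D(bZ-cY)=0$, this yields elements such as $a'Y-b'X\in A$ with $a',b'\in A$ and $\gcd_{B}(a',b')=1$. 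Part~(i) amounts to upgrading one such ``$A$–linear'' form to one with coefficients in $R$, i.e.\ to finding a unimodular $(\alpha,\beta,\gamma)\in R^{3}$ with $\alpha a+\beta b+\gamma c=0$; for then $U:=\alpha X+\beta Y+\gamma Z\in A$ and, $R$ being a PID, $U$ extends to a basis $U,V,W$ of $RX+RY+RZ$, hence to a coordinate system of $B$ obtained from $(X,Y,Z)$ by a linear change.

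To produce this relation I would pass to $K={\rm Frac}(R)$. Kernels of locally nilpotent derivations are stable under this base change, so $A_{K}:={\rm Ker}(D_{K})=A\otimes_{R}K$, and $A_{K}\cong K^{[2]}$ by Miyanishi's theorem. The plinth ideal $I_{1}(D_{K})\subseteq A_{K}$ is principal by \cite[Theorem~5.12]{F} (see also \cite[Theorem~1]{DK}) and contains $a=D_{K}X$, $b=D_{K}Y$, $c=D_{K}Z$, so its generator divides $\gcd_{B_{K}}(a,b,c)$, a unit since $\gcd_{B}(a,b,c)$ is; hence $I_{1}(D_{K})=A_{K}$ and, by Lemma~\ref{ii_slice}, $D_{K}$ has a slice $s$ and $B_{K}=A_{K}[s]$. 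From $D_{K}X=a\in A_{K}$ and $D_{K}s=1$ one reads off that $X$ (and likewise $Y,Z$) is affine–linear in $s$ over $A_{K}$: $(X,Y,Z)=(x_{0},y_{0},z_{0})+s\,(a,b,c)$ with $x_{0},y_{0},z_{0},a,b,c\in A_{K}\cong K^{[2]}$, and in particular $A_{K}=K[x_{0},y_{0},z_{0}]$. Now $(X,Y,Z)$ is a coordinate system of $A_{K}[s]=K^{[3]}$ whose Jacobian with respect to a coordinate system of $A_{K}[s]$ is a nonzero constant, and the whole pencil $\exp(nD_{K})(X,Y,Z)=(X+na,Y+nb,Z+nc)$, $n\in\bZ$, consists of coordinate systems; a careful analysis of this — for which I would invoke the structure of $A_{K}\cong K^{[2]}$ and of $\mathbb{G}_{a}$–actions on $K^{[3]}$, equivalently that an irreducible nice derivation of $K^{[3]}$ has rank at most two — is designed to force $a,b,c$ to be $K$–linearly dependent. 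Clearing denominators in a nonzero $K$–relation and dividing by its content gives the required unimodular $R$–relation, proving (i).

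Granting (i), take $U,V,W$ as above. Because $V,W\in RX+RY+RZ$ and $D$ is $R$–linear, $D^{2}V=D^{2}W=0$, while $DU=0$; thus $D$ is an irreducible nice $R[U]$–lnd on $R[U][V,W]=R[U]^{[2]}$, and $R[U]$ is again a UFD, so Lemma~\ref{Wang}(ii) applied over the base $R[U]$ gives $D=f\,\partial_{V}+g\,\partial_{W}$ with $f,g\in R[U]$ and $A={\rm Ker}(D)=R[U][gV-fW]=R[U,gV-fW]$, with $\gcd_{R[U]}(f,g)=1$ (otherwise $D$ would not be irreducible): this is (iii). Since $gV-fW$ is transcendental over $R[U]$, $A=R[U]^{[1]}=R^{[2]}$, which with $DU=0$ gives (ii) (and (i) in the literal form, $U$ being a linear form of $\{X,Y,Z\}$). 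For (iv): from $DB\subseteq(f,g)B$ together with $DV=f$, $DW=g$ we get $(DB)B=(f,g)B$, so $D$ is fixed point free exactly when $f,g$ are comaximal in $B$. If $f,g$ are comaximal, then $D$ has a slice by Corollary~\ref{2varufd} and Lemma~\ref{ii_slice}, so ${\rm rank}(D)=1$, and $B=A^{[1]}$ by Theorem~\ref{BhD}(ii) over $R[U]$. If $f,g$ are not comaximal, then, being coprime in the UFD $R[U]$, they form a regular sequence in $R[U]$, hence in $B$, so $A=R[U]^{[1]}$ by Theorem~\ref{BhD}(i); here $D$ is not fixed point free, hence has no slice, so ${\rm rank}(D)\neq1$ and therefore ${\rm rank}(D)=2$; and $B$ is not $A$–flat, for otherwise $B/(f,g)B$ would be flat over $A/(f,g)A$, on which the image of the variable $gV-fW$ is a nonzerodivisor that maps to $0$ in $B/(f,g)B$, forcing $B/(f,g)B=0$, i.e.\ $(f,g)B=B$, a contradiction.

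The step I expect to be the main obstacle is the one flagged in the second paragraph: showing that $DX,DY,DZ$ are linearly dependent over ${\rm Frac}(R)$, equivalently that an irreducible nice locally nilpotent derivation of $K^{[3]}$ has rank at most two. Everything after that is bookkeeping on top of the cited one– and two–variable results.
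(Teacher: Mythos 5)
First, note that the paper does not prove this statement at all: Theorem~\ref{Nice} is quoted verbatim from \cite[Theorem 3.6]{DG} as a known input, so there is no in-paper proof to compare yours against. Judged on its own, your proposal correctly reduces (ii)--(iv) to Lemma~\ref{Wang}(ii) and Theorem~\ref{BhD} over the base $R[U]$ once part (i) is available, and that bookkeeping is essentially sound (including the observation that a unimodular relation $\alpha DX+\beta DY+\gamma DZ=0$ over the PID $R$ extends $U=\alpha X+\beta Y+\gamma Z$ to a linear coordinate system, and the flatness argument in (iv)). But part (i) is the entire content of the theorem, and you do not prove it: the passage ``a careful analysis of this \dots is designed to force $a,b,c$ to be $K$-linearly dependent'' is a statement of intent, not an argument. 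That linear dependence is precisely Wang's theorem that nice derivations of $K^{[3]}$ have rank at most two (the ``application to $k[X,Y,Z]$'' of \cite{W}, proved there by homogenization), and it is not among the results the present paper quotes in a usable three-variable form; Lemma~\ref{Wang} is strictly a two-variable statement.

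There is also a logical slip in how you frame the missing step. You assert that the $K$-linear dependence of $DX,DY,DZ$ is ``equivalently'' the statement that an irreducible nice derivation of $K^{[3]}$ has rank at most two. Only one direction is formal: a linear form in the kernel gives rank $\leq 2$. The converse is not automatic --- rank $\leq 2$ produces \emph{some} variable of $K[X,Y,Z]$ killed by $D$, with no a priori reason for it to be linear in $X,Y,Z$ --- so even if you imported a rank bound as a black box, part (i) (and hence the ``linear change'' clause of the theorem) would still not follow without reworking the proof of that bound. Your Jacobian/pencil setup ($X=x_0+as$, etc., with the determinant condition in $K[p,q][s]$ forcing the $s^2$ and $s^1$ coefficients to vanish) is a plausible starting point for such an analysis, but the deduction of a $K$-linear relation among $a,b,c$ from those determinant identities is exactly where the work lies, and it is absent.
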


\smallskip
\indent
Next, we state a generalization of a result proved by S. Kaliman and L. Makar-Limanov (\cite{KML}). The proof given by the authors in \cite{KML} for $R=\bC$ works for any affine $k$-domain $R$ with a slight modification of the hypotheses.
\begin{lem}\label{MKL}
Let $R$ be an affine $k$-domain, $C=R[X_1,\dots,X_n]$ and $\lambda$ a weighted degree map on $C$. Suppose  $J$ is an ideal of $C$ contained in the ideal $(X_1,\dots,X_n)C$ and \\$\pi:C \rightarrow C/J$ is the canonical map. Define $\eta:C/J\longrightarrow\bR\cup\{-\infty\}$ by $$\eta(\alpha)=
\begin{cases}
	{\rm inf}\{\lambda(h):h\in{\pi}^{-1}(\alpha)\},& \text{if } \alpha\neq 0, \\
	-\infty,& \text{if }\alpha=0,
\end{cases}
$$ Then, for $\alpha\neq 0$, the following hold:
\begin{enumerate}
	\item[\rm(i)] there exists $h\in\pi^{-1}(\alpha)$ such that $\widetilde{h}\notin{\widetilde{J}}$,
	\item[\rm(ii)]$\eta(\alpha)=\lambda(h)$ for some $h\in\pi^{-1}(\alpha)$ if and only if $\widetilde{h}\notin\widetilde{J}$,
	\item[\rm(iii)] $\eta$ is a semi-degree map on $C/J$. If $\widetilde{J}$ is a prime ideal of $C$, then $\eta$ is a degree map.
\end{enumerate}
\end{lem}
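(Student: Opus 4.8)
The plan is to follow the Kaliman--Makar-Limanov argument of \cite{KML}, with two adjustments: replacing $R=\bC$ by the hypothesis that $R$ is an affine $k$-domain (which is what keeps the manipulations of top-degree forms valid), and taking care that $\lambda$ is normalised so that its set of values is well-ordered. Concretely one should take $\lambda$ to vanish on $R$ and to have non-negative weights $\lambda(X_i)$ --- this is the ``slight modification of the hypotheses'' alluded to, harmless in every intended application. First I would record the grading that underlies everything: since $\lambda$ is a weighted degree map, $C=\bigoplus_d C_d$ is graded by $\lambda$-value, with $C_d$ the $R$-submodule spanned by the monomials of $\lambda$-value $d$, the indices ranging over the finitely generated submonoid of $\bR_{\ge 0}$ generated by the $\lambda(X_i)$; for $p\in C$, $\widetilde p$ is exactly the degree-$\lambda(p)$ component of $p$, so $\widetilde J$ is a homogeneous ideal, proper by the hypothesis $J\subseteq(X_1,\dots,X_n)C$ (without which (i) would be vacuous). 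In particular a homogeneous element of $\widetilde J$ of degree $d$ can be written $\sum_j b_j\widetilde{q_j}$ with $q_j\in J$ and each $b_j$ homogeneous satisfying $\lambda(b_j)+\lambda(q_j)=d$ (extract the degree-$d$ part of an arbitrary expression of it as an element of $\widetilde J$).

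The device driving (i) and (ii) is a single \emph{reduction step}: given $\alpha\ne 0$ and $h\in\pi^{-1}(\alpha)$ with $\widetilde h\in\widetilde J$, write $\widetilde h=\sum_j b_j\widetilde{q_j}$ as above with $d=\lambda(h)$ and put $h':=h-\sum_j b_j q_j$; then $h'\in\pi^{-1}(\alpha)$ while the degree-$d$ component of $h'$ is $\widetilde h-\sum_j b_j\widetilde{q_j}=0$, so $\lambda(h')<\lambda(h)$. For (i) I would iterate this step: since the $\lambda$-values occurring on $C$ form a well-ordered subset of $\bR$ (only finitely many monomial values lie below a given bound, the $\lambda(X_i)$ being finitely many and non-negative), the iteration halts, and the lift it halts at has initial form outside $\widetilde J$.

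For (ii): if $\widetilde h\notin\widetilde J$ and some $h'\in\pi^{-1}(\alpha)$ had $\lambda(h')<\lambda(h)$, then $h-h'\in J$ would satisfy $\widetilde{h-h'}=\widetilde h$ (the terms of $h'$ sit in strictly lower $\lambda$-degree), forcing $\widetilde h\in\widetilde J$ --- a contradiction; so $\lambda(h)$ is the minimum of $\lambda$ over $\pi^{-1}(\alpha)$, that is $\eta(\alpha)=\lambda(h)$, and the infimum defining $\eta$ is attained. Conversely, if $\eta(\alpha)=\lambda(h)$ but $\widetilde h\in\widetilde J$, the reduction step yields a lift of $\alpha$ of strictly smaller $\lambda$-value, against minimality. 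For (iii), (i) and (ii) together give, for each $\alpha\ne 0$, a lift $h$ with $\lambda(h)=\eta(\alpha)$ and $\widetilde h\notin\widetilde J$; such $h$ is nonzero, so $\eta(\alpha)\in\bR$, while $\eta(0)=-\infty$. Taking such lifts $h,g$ of nonzero $\alpha,\beta$, the elements $hg\in\pi^{-1}(\alpha\beta)$ and $h+g\in\pi^{-1}(\alpha+\beta)$, fed into the definition of $\eta$ together with the semi-degree inequalities for $\lambda$, give $\eta(\alpha\beta)\le\eta(\alpha)+\eta(\beta)$ and $\eta(\alpha+\beta)\le\max\{\eta(\alpha),\eta(\beta)\}$, so $\eta$ is a semi-degree map. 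If in addition $\widetilde J$ is prime, I would promote the first inequality to an equality: as $R$ is a domain, $\widetilde{hg}=\widetilde h\,\widetilde g$ (the top forms do not cancel and their product is nonzero in the domain $C$), and since $\widetilde J$ is prime and contains neither $\widetilde h$ nor $\widetilde g$, it does not contain $\widetilde{hg}$; then (ii) gives $\eta(\alpha\beta)=\lambda(hg)=\lambda(h)+\lambda(g)=\eta(\alpha)+\eta(\beta)$, so $\eta$ is a degree map.

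The one step I expect to be delicate is the termination of the reduction process in part (i): it rests entirely on the value set of $\lambda$ being well-ordered, which is automatic over $R=\bC$ but over a general affine $k$-domain is exactly where the normalisation of $\lambda$ mentioned at the outset is needed. Everything after that is formal bookkeeping with initial forms, and the only other point at which the domain hypothesis on $R$ really enters is the non-cancellation of top-degree forms used for the ``degree map'' clause.
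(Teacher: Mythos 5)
The paper offers no proof of this lemma --- it only asserts that the argument of Kaliman and Makar--Limanov for $R=\bC$ carries over to an affine $k$-domain --- and your proposal is a correct reconstruction of exactly that argument: the initial-form reduction step $h\mapsto h-\sum_j b_jq_j$, the well-ordering of the value set (secured by your normalisation of $\lambda$) guaranteeing termination of the descent for (i) and attainment of the infimum for (ii), and non-cancellation of top forms in the domain $C$ together with primality of $\widetilde J$ for the degree-map clause. The one point worth flagging is that the paper's literal definition of ``degree map'' asks for equality in the sum rule rather than the product rule, which must be a typo (equality in the sum rule fails for $b=-a$, and multiplicativity $\eta(\alpha\beta)=\eta(\alpha)+\eta(\beta)$ is precisely what makes the induced filtration proper, which is how the lemma is used in Corollary \ref{primality}); the equality you prove is the intended content.
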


\section{LND filtrations and Top degree ideal}
In this section we prove some technical results, which are crucial for our main results in Sections $4$ and $5$. Throughout the rest of the section, $R$ will denote an affine $k$-domain. Let $B=R[X_1,\dots,X_n]$. 
Let $D\in{\rm LND}_R(B)\setminus\{0\}$. Suppose $A={\rm Ker}(D)$ is a finitely generated $R$-algebra. Then $I_1=(Ds_1,\dots,Ds_m)A$, for some $s_1,\dots,s_m\in B$. Without loss of generality, we can assume that $s_i(0,\dots,0)=0$, for each $i$.

\smallskip\noindent
Consider the filtration $\{\mathcal{F}_j\}_{j\in{\bN }}$ defined by
$\mathcal{F}_j={\rm Ker}(D^{j+1})$. Then it is easy to see the following:
\begin{enumerate}
\item[\rm(i)] $\mathcal{F}_0=A$,
\item[\rm(ii)] $\mathcal{F}_1=\sum_{l=1}^{m}s_l\mathcal{F}_0+\mathcal{F}_0$,
\item[\rm(iii)] $I_j=\big{(}D^j(\mathcal {F}_j)\big{)} A$ for each $j\in\bN$. 

\end{enumerate}

\noindent 
Let $u_l:={\rm deg}_D(X_l)$ for $l=1,\dots,n.$ For each $j\in\bN$, define $\mathcal{G}_j$ in the following way:
$$\mathcal{G}_j=\underset{\underset{j_1,\dots,j_{m+n}\geqslant 0}{j_1+\dots+j_m+u_1j_{m+1}+\dots+u_nj_{m+n}=j}}{\text{\Large $\sum$}}\big{(}s_1^{j_1}\dots s_m^{j_m}X_1^{j_{m+1}}\dots X_n^{j_{m+n}}\big{)}\lF_0.$$

\noindent
We note the following easy observations:
\begin{enumerate}
\item[\rm(i)] $\lG_0=\lF_0$,
\item[\rm(ii)] $\lG_i\lG_j\subseteq\lG_{i+j}$, for all $i,j\in\bN$.
\end{enumerate}

\noindent
For each $j\in\bN$, let
$\mathcal{H}_j=\sum_{i=0}^j\mathcal{G}_i$. Clearly, for each $j\in\bN$, $\lH_j\subseteq\lH_{j+1}$. For $i,j,q,r\in\bN$ with $0\leqslant q\leqslant i$ and $0\leqslant r\leqslant j$, since $\lG_q\lG_r\subseteq\lG_{q+r}\subseteq{\lH_{i+j}}$, we have $\lH_i\lH_j\subseteq\lH_{i+j}$.
Let $\alpha=\sum a_{i_1,\dots,i_n}{X_1}^{i_1}\dots{X_n}^{i_n}\in{B}$ and $S=\{(i_1,\dots,i_n)\in{\bN}^n:a_{i_1,\dots,i_n}\neq 0\}$.\\ If $n_0={\rm Max}~\{u_1i_1+\dots+u_ni_n:(i_1,\dots,i_n)\in S\}$, then $\alpha \in\lH_{n_0}$. So, $B=\bigcup_{j\in\bN}\lH_j$. Hence $\{\lH_j\}_{j\in\bN}$ is an $\bN$-filtration of $B$. When $\{\lH_j\}_{j\in\bN}$ is a proper $\bN$-filtration, the following lemma gives a generating set for each $I_j$.
\begin{lem}\label{jth image ideal}
If $\{\lH_j\}_{j\in\bN}$ is a proper $\bN$-filtration, then $I_j=\big{(} D^j(\mathcal{G}_j)\big{)}A$ for all $j\in{\bN}$.
\end{lem}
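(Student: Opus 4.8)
The plan is to bracket $I_j$ from below by $(D^j(\lG_j))A$ and to reduce the opposite inclusion to the single assertion that $\lF_j=\lH_j$ for every $j$.

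For the easy half I would first note, using Lemma~\ref{prule}(ii) together with $\deg_D(s_l)\leqslant 1$ (which holds since $Ds_l\in I_1\subseteq A$, i.e. $D^2s_l=0$, so $s_l\in\lF_1$), that every monomial $s_1^{j_1}\cdots s_m^{j_m}X_1^{j_{m+1}}\cdots X_n^{j_{m+n}}$ generating $\lG_j$ has $D$-degree at most $j_1+\cdots+j_m+u_1j_{m+1}+\cdots+u_nj_{m+n}=j$; hence $\lG_j\subseteq\lF_j$. Since $ag\in\lG_j$ and $D^j(ag)=aD^j(g)$ for $a\in A$, $g\in\lG_j$ (higher product rule), $D^j(\lG_j)$ is an ideal of $A$ contained in $D^j(\lF_j)=I_j$ (observation (iii)), so $(D^j(\lG_j))A\subseteq I_j$. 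For the reverse inclusion, since $\lG_i\subseteq\lF_i\subseteq\operatorname{Ker}(D^j)$ for $i<j$ we have $D^j(\lH_j)=D^j(\lG_j)$; hence it suffices to prove $\lF_j\subseteq\lH_j$ for every $j$, which then forces $\lF_j=\lH_j$ (the easy half gives $\supseteq$ termwise).

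I would prove $\lF_j=\lH_j$ by induction on $j$, the cases $j=0,1$ being immediate from observations (i) and (ii). For the inductive step assume $\lF_i=\lH_i$ for $i<j$. First one checks $D(\lH_j)\subseteq\lH_{j-1}$: only the $X_l$ with $u_l\leqslant j$ occur in weight-$\leqslant j$ monomials, and for such $l$ one has $DX_l\in\lF_{u_l-1}=\lH_{u_l-1}$ by the induction hypothesis, so applying $D$ to a weight-$j$ monomial (and using Lemma~\ref{prule}) lands in $\lH_{j-1}$. Thus $D$ induces a degree $-1$ map on the truncation of $\mathrm{gr}_{\lH}(B)=\bigoplus_i\lH_i/\lH_{i-1}$, which is a graded domain because $\{\lH_i\}$ is proper. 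On the other side $\deg_D$ is multiplicative by Lemma~\ref{prule}(ii), so the $D$-degree filtration $\{\lF_i\}$ is also proper; $\mathrm{gr}_{\lF}(B)$ is a graded domain, the induced $\overline{D}$ is homogeneous of degree $-1$, and $\operatorname{Ker}(\overline{D})=(\mathrm{gr}_{\lF})_0=A$. It follows that $\overline{D}^{\,i}$ is injective on $(\mathrm{gr}_{\lF})_i$ for every $i$ (a nonzero homogeneous element of positive degree sent into degree $0$ by fewer than its-degree-many applications of $\overline D$ would produce a nonzero invariant in positive degree, which is impossible), and the induction hypothesis makes the natural comparison map $\mathrm{gr}_{\lH}(B)\to\mathrm{gr}_{\lF}(B)$ injective in degrees $\leqslant j$.

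Given $b\in\lF_j$, one may assume $\deg_D b=j$ (otherwise $b\in\lF_{j-1}=\lH_{j-1}\subseteq\lH_j$). Expanding $b$ in the $R$-basis of monomials of $B$, let $N\geqslant j$ be the largest weight $u_1c_1+\cdots+u_nc_n$ of a monomial occurring in $b$; if $N=j$ we are done, and if $N>j$, splitting off the top-weight part $b^{(N)}$ we get $b-b^{(N)}\in\lH_{N-1}\subseteq\lF_{N-1}$, whence $D^N(b^{(N)})=0$. The crux is that this relation — a syzygy among the invariants $D^{u_l}(X_l)\in A$ — together with the injectivity statements above forces $b^{(N)}\in\lH_{N-1}$, hence $b\in\lH_{N-1}$; repeating, the top weight drops and eventually $b\in\lH_j$. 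I expect the genuine work to be exactly this last reduction: the naive top-weight form $b^{(N)}$ need not be $D$-homogeneous, so the vanishing $D^N(b^{(N)})=0$ must be transported carefully into a statement about leading forms in a ring that really is a domain — this is the point where properness of $\{\lH_j\}$ is used in an essential way — and the whole reduction must be organised as a well-founded induction, since the intrinsic top weight of $b$ does not by itself decrease once one merely knows $b\in\lH_{N-1}$.
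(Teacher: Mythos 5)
Your first paragraph is fine and matches the paper: $\lG_j\subseteq\lF_j$ gives $(D^j(\lG_j))A\subseteq I_j$, and since $D^j(\lH_j)=D^j(\lG_j)$, the whole lemma reduces to proving $\lF_j\subseteq\lH_j$ for all $j$, with the cases $j=0,1$ immediate. The problem is that you never actually prove this reverse inclusion. The decisive step of your sketch is the assertion that $D^N(b^{(N)})=0$ ``forces'' $b^{(N)}\in\lH_{N-1}$, and you offer no argument for it beyond an appeal to ``the injectivity statements above.'' Those statements (injectivity of $\lH_i/\lH_{i-1}\to\lF_i/\lF_{i-1}$, i.e.\ $\lH_i\cap\lF_{i-1}=\lH_{i-1}$, and injectivity of $\overline{D}^{\,i}$ on the $i$-th graded piece of the $\deg_D$-filtration) are the trivial direction; what you need is surjectivity of the comparison map in degree $j$, which is equivalent to the statement being proved. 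Concretely, $D^N(b^{(N)})=0$ is a syzygy in $A$ among the products $\prod_l(D^{u_l}X_l)^{c_l}$, and deducing $b^{(N)}\in\lH_{N-1}$ from it requires lifting that syzygy back through the filtration --- membership in $\lH_{N-1}$ is not a condition on $X$-monomial weights alone, since the generators $s_l$ of $\lG_1$ may have large $X$-weight. You also flag, correctly, that your descent is not well-founded: knowing $b\in\lH_{N-1}$ does not lower the top $X$-weight of $b$, so the ``repeating'' step has nothing to recurse on. Both difficulties are acknowledged in your own last sentences; acknowledging them is not the same as resolving them.

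The paper closes exactly this gap by a different and much shorter device: take a local slice $s\in\lF_1\setminus\lF_0$ with $f=Ds\in A$. Since $B[1/f]=A[1/f][s]$ and $\deg_D\alpha\leqslant j$, one can write $f^{\epsilon}\alpha=a_0+a_1s+\dots+a_js^j$ with $a_i\in A$. Now properness of $\{\lH_j\}$ is used precisely where you suspected it must be, but in a concrete way: the associated degree map $\theta$ is multiplicative, so $\theta(f^{\epsilon}\alpha)=\epsilon\,\theta(f)+\theta(\alpha)=\theta(\alpha)$, while $\theta(a_is^i)\leqslant i\leqslant j$ because $\theta(a_i)=\theta(f)=0$ and $\theta(s)=1$ (these follow from $\lF_i=\lH_i$ for $i=0,1$). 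Hence $\theta(\alpha)\leqslant j$, i.e.\ $\alpha\in\lH_j$, with no graded-ring or leading-form analysis needed. If you want to salvage your outline, the missing lifting argument would have to be supplied in full; as written, the proposal identifies the hard point but does not prove it.
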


\begin{proof}
By Lemma \ref{prule}, $\lH_j\subseteq\lF_j$ for each $j\in\bN$. It is clear that $\lH_0=\lG_0=\lF_0$ and $\lF_1~(=\sum_{l=1}^ms_l\lF_0+\lF_0)\subseteq{\lG_1}+\lG_0=\lH_1$. Hence $\lF_j=\lH_j$ for $j=0,1$. Let $\alpha\in\lF_j$ for some $j\in\bN~(j\geqslant 2)$ and $\theta$ be the degree map on $B$ corresponding to the filtration $\{\lH_j\}_{j\in\bN}$ of $B$.
Let $s\in{\lF_1\setminus\lF_0}$ and $f=Ds$. By Principle $11$ in \cite{F}, $B[1/f]=A[1/f][s]$ and hence $\exists~\epsilon\in\bN$ and $a_0,\dots a_j\in A$ such that $f^\epsilon\alpha=a_0+a_1s+\dots+a_js^j$. Since $\lF_j=\lH_j$ for $j=0,1$, we have $\theta(s)=1$ and $\theta(f)=0=\theta(a_i)$ for all $i=0,\dots,j$. Hence $\theta(\alpha)\leqslant j$. So, $\alpha\in\lH_j$. Thus $\lH_j=\lF_j$ for each $j\in\bN$. Since $\lH_j=\lG_j+\lH_{j-1}$, we have $\lF_j=\lG_j+\lF_{j-1}$ for all $j\in\bN$. Hence $I_j=\big{(}D^j(\lG_{j})\big{)}A$ for all $j\in\bN$.
\end{proof}

\indent Now, we will consider the situation when ${\rm Ker}(D)=R[z_1,\dots,z_r]$ for some $z_1,\dots,z_r$ in $R[X_1,\dots,X_n]$ with $z_i(0,\dots,0)=0$ for all $i$, $1\leqslant i\leqslant r$. We define a weighted $\bN$-degree map $\lambda$ on $C:=R[X_1,\dots,X_n,Z_1,\dots,Z_r,S_1,\dots,S_m]~(=R^{[n+r+m]})$, by setting $\lambda(r)=0$ for all $r\in R$, $\lambda(X_l)=u_l$ for  $1\leqslant l\leqslant n$, $\lambda(Z_l)=0$ for $1\leqslant l\leqslant r$ and $\lambda(S_l)=1$ for $1\leqslant l\leqslant m$. Let $J$ be the ideal of $C$ generated by the set $\{Z_1-z_1,\dots,Z_r-z_r,S_1-s_1,\dots,S_m-s_m\}$. Then $C/J\cong B$. By Lemma \ref{MKL}, $\lambda$ induces a semi-degree map $\eta$ on $B$ defined for each $\alpha\in B\setminus\{0\}$, $$\eta(\alpha)={\rm inf}\{\lambda(h):h\in{\pi}^{-1}(\alpha)\}, \text{where }\pi:C \rightarrow B \text{ is the canonical map}.$$

\smallskip\noindent
The next result shows that the filtration induced by $\eta$ is $\{\mathcal{H}_j\}_{j\in\bN}$.

\begin{lem}\label{induced fil of eta}
$\{\lH_i\}_{i\in\bN}$ is the filtration induced by the semi-degree map $\eta$ on $B$.
\end{lem}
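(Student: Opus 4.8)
The plan is to show that the filtration $\{\lB_i^\eta\}_{i\in\bN}$ induced by the semi-degree map $\eta$, namely $\lB_i^\eta=\{\alpha\in B:\eta(\alpha)\leqslant i\}$, coincides with $\{\lH_i\}_{i\in\bN}$. This is a two-sided containment for each $i\in\bN$. First I would unwind the definition of $\eta$: for $\alpha\in B\setminus\{0\}$, $\eta(\alpha)\leqslant i$ means there is some $h\in\pi^{-1}(\alpha)$ with $\lambda(h)\leqslant i$, i.e.\ $\alpha$ can be written as a polynomial expression $h(X_1,\dots,X_n,z_1,\dots,z_r,s_1,\dots,s_m)$ in which every monomial has $\lambda$-weight at most $i$. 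Since $\lambda(X_l)=u_l$, $\lambda(Z_l)=0$, $\lambda(S_l)=1$ and $\lambda$ vanishes on $R$, such a monomial contributes, after substitution, an element of $\big(s_1^{j_1}\cdots s_m^{j_m}X_1^{j_{m+1}}\cdots X_n^{j_{m+n}}\big)\lF_0$ with $j_1+\dots+j_m+u_1j_{m+1}+\dots+u_nj_{m+n}\leqslant i$ (the coefficient, a polynomial in $R$ and the $z_l$'s, lies in $\lF_0=A=\operatorname{Ker}(D)\supseteq R[z_1,\dots,z_r]$). Summing such monomials shows $\alpha\in\sum_{q=0}^i\lG_q=\lH_i$. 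Hence $\lB_i^\eta\subseteq\lH_i$.

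For the reverse containment, I would take $\alpha\in\lH_i$, so $\alpha=\sum_{q=0}^i g_q$ with $g_q\in\lG_q$; it suffices to treat a single generator of $\lG_q$, i.e.\ $\alpha=a\,s_1^{j_1}\cdots s_m^{j_m}X_1^{j_{m+1}}\cdots X_n^{j_{m+n}}$ with $a\in\lF_0$ and $j_1+\dots+j_m+u_1j_{m+1}+\dots+u_nj_{m+n}=q\leqslant i$. Because $a\in A=R[z_1,\dots,z_r]$, we may lift $a$ to a polynomial $\hat a\in R[Z_1,\dots,Z_r]$ with $\lambda(\hat a)=0$, and then $h:=\hat a\,S_1^{j_1}\cdots S_m^{j_m}X_1^{j_{m+1}}\cdots X_n^{j_{m+n}}$ is a lift of $\alpha$ with $\lambda(h)=q\leqslant i$. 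Therefore $\eta(\alpha)\leqslant i$, i.e.\ $\alpha\in\lB_i^\eta$, giving $\lH_i\subseteq\lB_i^\eta$ and hence equality. Since both $\{\lH_i\}$ and $\{\lB_i^\eta\}$ are $\bN$-filtrations of $B$ (the former was checked in the text, the latter follows from $\eta$ being a semi-degree map), the equality of the two filtrations follows.

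The only genuinely delicate point is the handling of coefficients: one must be sure that when $\alpha$ is presented via $\eta$ as the image of some $h\in C$ with $\lambda(h)\leqslant i$, the scalars appearing can really be absorbed into $\lF_0$ without increasing the weight. This is automatic here because $\lambda|_R=0$ and $\lambda(Z_l)=0$, so every element of the subring $R[Z_1,\dots,Z_r]$ has $\lambda$-degree $0$ and maps into $\lF_0=A$; I would state this observation explicitly. A secondary subtlety is that $h$ need not be a single monomial — it is a sum of monomials each of weight $\leqslant i$ — but since $\lH_i$ is closed under addition this causes no trouble, and conversely the expression for $\alpha\in\lH_i$ as a finite sum of $\lG_q$-generators reduces the reverse inclusion to the monomial case treated above.
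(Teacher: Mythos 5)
Your proof is correct and follows essentially the same route as the paper: both identify $\lH_i$ with the image under $\pi$ of the weight-$\leqslant i$ part of $C$, decomposing lifts into monomials whose $R[Z_1,\dots,Z_r]$-coefficients land in $\lF_0$, and both handle the reverse inclusion by exhibiting lifts of weight $\leqslant i$ (equivalently, by the semi-degree property of $\eta$). The only cosmetic difference is that where the paper invokes Lemma \ref{MKL}(i)--(ii) to produce $h\in\pi^{-1}(\alpha)$ with $\lambda(h)=\eta(\alpha)$, you rely on the equally valid observation that $\lambda$ is $\bN$-valued, so the infimum defining $\eta$ is attained.
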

\begin{proof}
Let $\{\bH_i\}_{i\in\bN}$ be the filtration of $C$ induced by the weighted degree map $\lambda$ on $C$. Then, $\bH_i=\text{\Large$\bigoplus$}_{j=0}^i{\bG}_j,$ where, for each $j$ with $0\leqslant j\leqslant i$, $${\bG}_j=\underset{\underset{j_1,\dots,j_{m+n}\geqslant 0}{j_1+\dots+j_m+u_1j_{m+1}+\dots+u_nj_{m+n}=j}}{\text{\Large$\bigoplus$}}\big{(}S_1^{j_1}\dots S_m^{j_m}X_1^{j_{m+1}}\dots X_n^{j_{m+n}}\big{)}R[Z_1,\dots,Z_r].$$ Clearly, for each $j$ with $0\leqslant j\leqslant i$, we have $\pi(\bG_j)=\lG_j$ and hence for each $i\in\bN$, $\pi(\bH_i)=\lH_i$.

\smallskip\noindent
Let $\alpha\in B\setminus\{0\}$ be such that $\eta(\alpha)\leqslant i$. By part (i) and (ii) of Lemma \ref{MKL}, there exists $h\in\pi^{-1}(\alpha)$ such that $\lambda(h)=\eta(\alpha)$ and hence $h\in\bH_i$. So, $\alpha~\big{(}=\pi(h)\big{)}\in\pi(\bH_i)=\lH_i$.

\smallskip\noindent
Since $\eta(X_l)\leqslant\lambda(X_l)=u_l$ for all $l\in\{1,\dots,n\}$, $\eta(s_l)\leqslant \lambda(S_l)=1$ for all $l\in\{1,\dots,m\}$, $\eta(z_l)\leqslant \lambda(Z_l)=0$ for all $l\in\{1,\dots,r\}$ and $\eta$ is a semi-degree map on $C$, we have $\eta(\alpha)\leqslant i$ for all $\alpha\in\lH_i$.
\end{proof}

\noindent
From Lemma \ref{MKL}(iii), Lemma \ref{induced fil of eta}  and Lemma \ref{jth image ideal}, we immediately deduce the following corollary.
\begin{cor}\label{primality}
If $\widetilde{J}$ is a prime ideal of $C$, then for each $j\geqslant 0$ we have $I_j=\big{(}D^j(\lG_j)\big{)} A$.
\end{cor}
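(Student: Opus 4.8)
The plan is to assemble the statement from the three results quoted immediately before it, in the order they are listed. First I would invoke Lemma~\ref{MKL}(iii) for the ideal $J\subseteq C$ from the construction preceding the corollary: since $\widetilde{J}$ is assumed prime, that lemma promotes the semi-degree map $\eta$ on $B\cong C/J$ to a genuine degree map. One small point to record here is that the hypotheses of Lemma~\ref{MKL} are indeed met in this setting: $J=(Z_1-z_1,\dots,Z_r-z_r,S_1-s_1,\dots,S_m-s_m)C$ is contained in the ideal of $C$ generated by all of its variables $X_1,\dots,X_n,Z_1,\dots,Z_r,S_1,\dots,S_m$, exactly because of the normalizations $z_i(0,\dots,0)=0$ and $s_i(0,\dots,0)=0$ fixed at the start of the section.

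Next I would observe that $\eta$ takes values in $\bN\cup\{-\infty\}$. Indeed, $\lambda$ is a weighted $\bN$-degree map on $C$ --- its weights $u_1,\dots,u_n$ on the $X_l$, together with the zero weights on the $Z_l$ and the unit weights on the $S_l$, all lie in $\bN$ --- so for $\alpha\neq 0$ the value $\eta(\alpha)=\inf\{\lambda(h):h\in\pi^{-1}(\alpha)\}$ is the infimum of a non-empty subset of $\bN$, hence a non-negative integer. Thus $\eta$ is an $\bN$-degree map on $B$, and by the one-to-one correspondence between $\bN$-degree maps on $B$ and proper $\bN$-filtrations of $B$ recalled in Section~2, the $\bN$-filtration of $B$ induced by $\eta$ is proper.

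Then I would apply Lemma~\ref{induced fil of eta}, which identifies the filtration induced by $\eta$ with $\{\lH_j\}_{j\in\bN}$; combined with the previous step this shows that $\{\lH_j\}_{j\in\bN}$ is a proper $\bN$-filtration of $B$. Lemma~\ref{jth image ideal} now applies directly and yields $I_j=\big(D^j(\lG_j)\big)A$ for every $j\geqslant 0$, which is the assertion.

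I do not expect a genuine obstacle here, since the argument is purely a chaining of facts already established in the excerpt. The only step deserving a moment's care is the passage from ``$\eta$ is a degree map'' (all that Lemma~\ref{MKL}(iii) literally delivers) to ``$\{\lH_j\}_{j\in\bN}$ is a \emph{proper} $\bN$-filtration'': one has to first check that $\eta$ is $\bN$-valued so that the Section~2 correspondence with proper $\bN$-filtrations is available, and then invoke Lemma~\ref{induced fil of eta} to recognise that proper filtration as $\{\lH_j\}_{j\in\bN}$. With that in place, Lemma~\ref{jth image ideal} is the only substantive input, and it applies verbatim.
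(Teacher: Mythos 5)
Your proposal is correct and follows exactly the route the paper takes: the paper states that the corollary is ``immediately deduced'' from Lemma~\ref{MKL}(iii), Lemma~\ref{induced fil of eta} and Lemma~\ref{jth image ideal}, which is precisely your chain of reasoning. Your additional checks --- that $J\subseteq(X_1,\dots,X_n,Z_1,\dots,Z_r,S_1,\dots,S_m)C$ via the normalizations $z_i(0,\dots,0)=s_i(0,\dots,0)=0$, and that $\eta$ is $\bN$-valued so the degree-map/proper-filtration correspondence applies --- are details the paper leaves implicit, and they are verified correctly.
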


\indent
Suppose $\lambda$ is a weighted degree map on $B~(=R^{[n]})$ and $J=(f_1,\dots,f_m)$ is an ideal of $B$. It is interesting to investigate conditions under which $\widetilde{J}=(\widetilde{f_1},\dots,\widetilde{f_m})$. If $m=1$, then it is easy to see that $\widetilde{J}=(\widetilde{f_1})$. But for $m\geqslant 2$ this is not the case. The following proposition gives sufficient conditions under which $\widetilde{J}=(\widetilde{f_1},\dots,\widetilde{f_m})$.
\begin{prop}\label{top degree ideal}
Let $\lambda$ be a weighted degree map on $B~(=R^{[n]})$ and $J=(f_1,\dots,f_m)$ an ideal of $B$. Suppose the following conditions hold:
\begin{enumerate}
	\item [\rm(i)]$\widetilde{f_i}=f_i$ for $1\leqslant i\leqslant m-1$,
	\item[\rm(ii)] $\widetilde{f_m}$ is a non-zerodivisor in $B/(f_1,\dots,f_{m-1})$.
\end{enumerate}

\noindent
Then $\widetilde{J}=(f_1,\dots,f_{m-1}, \widetilde{f_m})$.
\end{prop}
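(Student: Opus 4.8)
The plan is to prove both inclusions. One direction is immediate: since each $f_i$ with $i\leqslant m-1$ equals its own top-degree part $\widetilde{f_i}$, and $\widetilde{f_m}=\widetilde{f_m}$, all generators on the right are of the form $\widetilde{p}$ for suitable $p\in J$, so $(f_1,\dots,f_{m-1},\widetilde{f_m})\subseteq\widetilde{J}$. The content is in the reverse inclusion $\widetilde{J}\subseteq(f_1,\dots,f_{m-1},\widetilde{f_m})$, for which it suffices to show that for every $p\in J$ the top-degree form $\widetilde{p}$ lies in the ideal $K:=(f_1,\dots,f_{m-1},\widetilde{f_m})$.

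So fix $p\in J$ and write $p=\sum_{i=1}^{m}g_if_i$ with $g_i\in B$. Working modulo the $\lambda$-homogeneous ideal $(f_1,\dots,f_{m-1})$, which is generated by homogeneous elements and hence such that $\widetilde{q}\bmod(f_1,\dots,f_{m-1})$ is well behaved, I would reduce to the case $m=1$ relative to the quotient ring $\bar B:=B/(f_1,\dots,f_{m-1})$. Concretely, let $d=\lambda(p)$; I want to show $\widetilde p\in K$. The first step is to note that $K\supseteq(f_1,\dots,f_{m-1})$ and that in $\bar B$ the image $\bar p$ equals $\bar g_m\,\bar f_m$. The key point is condition (ii): $\widetilde{f_m}$ is a non-zerodivisor in $\bar B$. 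I would then invoke the elementary $m=1$ statement noted in the paragraph preceding the proposition — for a single generator $f$, $\widetilde{(f)}=(\widetilde f)$ — but now I have to run that argument inside $\bar B$ with the induced grading/degree function, which requires knowing that passing to the quotient by a homogeneous ideal respects top-degree forms in the needed sense. This is exactly where the non-zerodivisor hypothesis does its work: it guarantees $\lambda(\bar g_m\bar f_m)=\lambda(\bar g_m)+\lambda(\widetilde{f_m})$ so that the top-degree form of the product $\bar g_m\bar f_m$ is the product of the top-degree forms, with no cancellation collapsing the degree.

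The remaining bookkeeping is to transfer this back to $B$. Suppose $\lambda(g_mf_m)=d$ (the generic case): then $\widetilde{g_mf_m}=\widetilde{g_m}\,\widetilde{f_m}$ provided $\widetilde{g_m}\,\widetilde{f_m}$ does not get killed modulo $(f_1,\dots,f_{m-1})$ — which is precisely the non-zerodivisor condition — and the terms of $\sum_{i<m}g_if_i$ of degree $d$ lie in $(f_1,\dots,f_{m-1})\subseteq K$, while the degree-$d$ part of $g_mf_m$ is (up to lower-order corrections absorbed into $(f_1,\dots,f_{m-1})$) a multiple of $\widetilde{f_m}$, hence in $K$; summing, $\widetilde p\in K$. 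If instead $\lambda(g_mf_m)<d$ or $\lambda(g_mf_m)>d$ with cancellation, one handles it by choosing, among all representations $p=\sum g_if_i$, one minimizing $\max_i\lambda(g_if_i)$, and argues that this maximum must equal $d$: otherwise the top-degree parts $\sum_i\widetilde{g_i}\,\widetilde{f_i}$ (restricted to the indices achieving the maximum) vanish, giving a syzygy among the $\widetilde{f_i}$; using that $\widetilde{f_m}$ is a non-zerodivisor mod $(f_1,\dots,f_{m-1})$ one can then subtract off a multiple of $f_m$ and strictly decrease the maximum, contradicting minimality.

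The main obstacle I anticipate is making the quotient-ring reduction fully rigorous: one must check carefully that for a $\lambda$-homogeneous ideal $\mathfrak a=(f_1,\dots,f_{m-1})$ the induced degree function on $B/\mathfrak a$ interacts correctly with top-degree forms, and in particular that the top-degree form of an element of $B/\mathfrak a$ lifts to a top-degree form in $B$ modulo $\mathfrak a$. This is where the hypothesis that only $f_m$ is allowed to be non-homogeneous is essential — it keeps $\mathfrak a$ homogeneous so that the grading descends cleanly — and where the non-zerodivisor assumption on $\widetilde{f_m}$ prevents the degree drop that would otherwise break the argument. Once that reduction is in place, the rest is the standard single-generator computation plus the minimal-representation trick for handling cancellation.
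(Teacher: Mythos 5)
Your easy inclusion and your overall strategy in the third paragraph (minimize $\max_i\lambda(g_if_i)$ over representations, then read off the degree-$d$ part) are viable and genuinely different from the paper's argument, but as written there are two gaps. First, the quotient-ring reduction of your second paragraph does not go through: the function induced on $\bar B=B/(f_1,\dots,f_{m-1})$ is in general only a \emph{semi}-degree map (compare Lemma \ref{MKL}(iii), where primality of the top-degree ideal is needed to get a degree map), $\bar B$ need not be a domain, and the single-generator identity $\widetilde{(f)}=(\widetilde f)$ rests on $\lambda(gf)=\lambda(g)+\lambda(f)$, which can fail in a quotient. You correctly flag this as the main obstacle, but it is not merely a bookkeeping issue --- that route should be abandoned in favour of your third paragraph. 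Second, and more seriously, the key step of the minimal-representation argument is misstated: if $e:=\max_i(\lambda(g_i)+\lambda(f_i))>d$ and $m$ achieves the maximum, then hypothesis (ii) gives $\widetilde{g_m}=\sum_{i<m}h_if_i$ with $h_i$ homogeneous, and the substitution $g_m\mapsto g_m-\widetilde{g_m}$, $g_i\mapsto g_i+h_if_m$ does \emph{not} by itself strictly decrease the maximum --- the indices $i<m$ can still sit at level $e$. You must add a second move: after the substitution (or directly, if $m$ never achieved the maximum) the degree-$e$ components satisfy a syzygy $\sum_{i<m}c_if_i=0$ with $c_i$ homogeneous, and since $f_1,\dots,f_{m-1}$ are themselves homogeneous you may subtract these top components from the $g_i$ without changing $p$; only the combination of the two moves strictly lowers $e$. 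The case distinction ($m$ in or not in the set of indices achieving the maximum) also needs to be made explicit. With these repairs the argument closes: once $e=d$, the degree-$d$ part of $p$ is $\sum_{i<m}\widetilde{g_i}f_i+\widetilde{g_m}\widetilde{f_m}\in(f_1,\dots,f_{m-1},\widetilde{f_m})$.

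For comparison, the paper never modifies the representation: it fixes $g=\sum r_if_i$, expands everything into products $x(i;s_i;t_i)$ of homogeneous components, stratifies these by $\lambda$-degree into levels $M_0,M_1,\dots$, and proves by descending induction (the statements $P(n)$) that whenever all higher level-sums vanish, every cross term $r_m^{(\cdot)}f_m^{(\cdot)}$ with $t_m>0$ appearing at the current level already lies in $(f_1,\dots,f_{m-1})$; the first nonvanishing level-sum is then $\widetilde g$ and lands in the right ideal. Both proofs use hypothesis (ii) at exactly the same point --- a top-level cancellation forces $\widetilde{r_m}\in(f_1,\dots,f_{m-1})$ --- so your approach, once the reduction step is fixed as above, is an acceptable alternative of comparable length.
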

\begin{proof}
Since for each $i$, $1\leqslant i\leqslant m$, $\widetilde{f_i}\in\widetilde{J}$, we clearly have $(f_1,\dots,f_{m-1}, \widetilde{f_m})\subseteq \widetilde{J}$.

\medskip\noindent
Let $I=\{1,\dots,m\}$ and $0\neq g={\underset{i\in I}\sum}r_if_i$, where $r_i\in B$ for all $i\in I$. Let $J_m:=(f_1,\dots,f_{m-1})$ and for each $i\in I$, $\lambda(r_i)=u_i$ and $\lambda(f_i)=v_i$. For $f\in B$, by $f^{(l)}$ we will denote the $l$-degree homogeneous component of $f$ and for $0\leqslant s_i\leqslant u_i$, $0\leqslant t_i\leqslant v_i$, we define $x(i;s_i;t_i):=r_i^{(u_i-s_i)}f_i^{(v_i-t_i)}$. Then $g={\underset{0\leqslant t_i\leqslant v_i}{\underset{0\leqslant s_i\leqslant u_i}{\underset{i\in I}\sum}}}x(i;s_i;t_i).$

\smallskip\noindent Let $M=\{x(i;s_i;t_i):i\in I,0\leqslant s_i\leqslant u_i,0\leqslant t_i\leqslant v_i\}$, $\mu_0:={\rm Max}\{\lambda(x):x\in M \}$ and $M_0:=\{x\in M:\lambda(x)=\mu_0\}$. Clearly, if $x(i;s_i;t_i)\in M_0$, then $s_i=0=t_i$. 

\smallskip\noindent If ${\underset{x\in{M_0}}\sum}x\neq 0$, then $\widetilde {g}={\underset{x\in{M_0}}\sum}x={\underset{M_0}\sum}\widetilde{r_i}\widetilde{f_i}\in (J_m,\widetilde{f_m}).$ If ${\underset{x\in{M_0}}\sum}x=0$, then we will define $\mu_1$ and $M_1$ in the following way:
$$\mu_1:={\rm Max}\{\lambda(x):x\in M\setminus M_0 \} \text{ and }M_1:=\{x\in M\setminus M_0:\lambda(x)=\mu_1\}.$$ 
If ${\underset{x\in{M_1}}\sum}x\neq 0$, then $\widetilde {g}={\underset{x\in{M_1}}\sum}x$. In this situation we note that  for some $t_i>0$, $x(i;s_i;t_i)\in M_1$ will imply $i=m$. Clearly, $x(m;s_m;t_m)\in M_1$ for some $t_m>0$ only if $x(m;s_m,0)\in M_0$ and hence $s_m=0$. Since ${\underset{x\in{M_0}}\sum}x= 0$, by hypothesis (ii) of the proposition, $x(m;0;0)\in M_0$ only if $r_m^{(u_m)}\in J_m$. So, for some $t_m>0$, $x(m;s_m;t_m)\in M_1$ only if $x(m;s_m;t_m)~\big{(}=x(m;0;t_m)=r_m^{(u_m)}f_m^{(v_m-t_m)}\big{)}\in J_m$. Hence, ${\underset{x\in{M_1}}\sum}x\neq 0$ will imply $\widetilde{g}\in (J_m,\widetilde{f_m})$.

\medskip\noindent
If ${\underset{M_1}\sum}x=0$, then we will define $\mu_2$ and $M_2$ in the following way and proceed similarly.
$$\mu_2:={\rm Max}\{\lambda(x):x\in (M\setminus M_0)\setminus M_1 \}$$ $$\text{and }M_2:=\{x\in (M\setminus M_0)\setminus M_1:\lambda(x)=\mu_2\}.$$ 
Now, for each positive integer $N$ we consider the following statement:
$$P(N):\text{ If }{\underset{x\in M_j}\sum}x=0 \text{ for all } j<N, \text{ then, for }t_m>0, x(m;s_m;t_m)\in M_N\text{ only if }x(m;s_m;t_m)\in J_m.$$

\noindent
We have already observed that $P(1)$ is true. Assume that $P(1),P(2),\dots,P(N)$ is true for some $N\geqslant 1$. We will show that $P(N+1)$ is also true. Let ${\underset{x\in M_j}\sum}x=0$ for all $j\leqslant N$ and $x(m;s_m^{'};t_m^{'})\in M_{N+1}$ for some fixed $s_m^{'},t_m^{'}$ with $0\leqslant s_m^{'}\leqslant u_m$ and $1\leqslant t_m^{'}\leqslant v_m$. Then $x(m;s_m^{'};0)\in M_{j_0}$ for some $0\leqslant j_0\leqslant N$. We observe the following:
\begin{enumerate}
	\item $0={\underset{M_{j_0}}\sum}x(i;s_i;t_i)={\underset{i\neq m}{\underset{M_{j_0}}\sum}}x(i;s_i;0)+{\underset{M_{j_0}}\sum}x(m;s_m;0)+{\underset{t_m>0}{\underset{M_{j_0}}\sum}}x(m;s_m;t_m)$,
	\item ${\underset{i\neq m}{\underset{M_{j_0}}\sum}}x(i;s_i;0)\in J_m$,
	\item Since $P(j_0)$ is true, ${\underset{t_m> 0}{\underset{M_{j_0}}\sum}}x(m;s_m;t_m)\in J_m$,
	\item Since $x(m;s_m^{'};0)\in M_{j_0}$, $x(m;s_m;0)\in M_{j_0}$ if and only if $s_m=s_m^{'}$.
\end{enumerate}
\medskip\noindent 
Hence $x(m,s_m^{'},0)\in J_m$ and  by hypothesis (ii), $r_m^{(u_m-s_m^{'})}\in J_m$. So, $x(m;s_m^{'};t_m^{'})\in J_m$. Thus, $P(n+1)$ is true and hence $P(n)$ is true for all $n\in\bN$.

\medskip\noindent
Now, for $n\in\bN$, if ${\underset{x\in M_j}\sum}x= 0$ for all $j<n$ and ${\underset{x\in M_n}\sum}x\neq 0$, then $\widetilde{g}={\underset{x\in M_n}\sum}x\in(J_m,\widetilde{f_m})$. If ${\underset{x\in M_j}\sum}x=0$ for all $j\leqslant n$, then we will consider $\mu_{n+1},M_{n+1}$ and repeat the arguments for $M_{n+1}$. Since $M$ is a finite set this process will stop after a finite number of steps.
\end{proof}

\section{Structure of $I_n~(n\geqslant 1)$ for nice derivations}
Let $R$ be a UFD, finitely generated over $k$. In this section, we compute the structure of all the image ideals of an irreducible nice $R$-derivation on $R^{[2]}$.
\begin{thm}\label{inice}
Let $R$ be a {\rm UFD} and $B=R[X_1,X_2]$. Let $D$ be an irreducible $R$-lnd on $B$ such that $D^2X_1=D^2X_2=0$. Let $A={\rm Ker}(D)$. Then, for each $j\in \bN, I_j ={( DX_1,DX_2)}^jA$.
\end{thm}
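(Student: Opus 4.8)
The plan is to invoke Lemma~\ref{Wang}(ii), which tells us that after a linear change of coordinates we may assume $D = b\,\partial/\partial X_1 - a\,\partial/\partial X_2$ for some $a,b \in R$, that $A = {\rm Ker}(D) = R[aX_1+bX_2]$, and that $DX_1 = b$, $DX_2 = -a$ lie in $R$. Since $D$ is irreducible, $\gcd_R(a,b) = 1$, so ${(DX_1,DX_2)}^j A = (a,b)^j A$ is the ideal of $A$ generated by $\{a^j, a^{j-1}b, \dots, b^j\}$. Thus the theorem reduces to showing $I_j = (a^j, a^{j-1}b, \dots, b^j)A$ for all $j$. Set $z := aX_1 + bX_2$, so $A = R[z] = R^{[1]}$.

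The inclusion $(a,b)^jA \subseteq I_j$ is the easy direction: each monomial $a^{j-i}b^i$ is a value of $D^j$ on a suitable degree-$j$ polynomial. Concretely, $D^j(X_1^{j-i} X_2^{i}) = \tfrac{j!}{(j-i)!\,i!}\,b^{j-i}(-a)^i$ by Lemma~\ref{prule}(i) (since $\deg_D X_1 = \deg_D X_2 = 1$), and these elements of $R \subseteq A$ generate $(a,b)^jA$ up to units. For the reverse inclusion $I_j \subseteq (a,b)^jA$, I would use the filtration machinery set up before the theorem. Here $n=2$, $u_1=u_2=1$, one can take $s_1 = X_1$, $s_2 = X_2$ as generators producing the plinth ideal (so $m=2$, $s_1=X_1$, $s_2=X_2$, and $z_1 = z$). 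Then $\mathcal{G}_j = \sum_{j_1+j_2=j} X_1^{j_1}X_2^{j_2}\,\mathcal{F}_0$ and one checks $\mathcal{H}_j = \mathcal{F}_j = {\rm Ker}(D^{j+1})$, so that by Lemma~\ref{jth image ideal} (once properness is verified) $I_j = \big(D^j(\mathcal{G}_j)\big)A$. A general element of $\mathcal{G}_j$ is $\sum_{j_1+j_2=j} X_1^{j_1}X_2^{j_2} g_{j_1,j_2}(z)$ with $g_{j_1,j_2}\in R[z]$; applying $D^j$ and using that $D$ kills $z$ and $R$, Lemma~\ref{prule}(i) gives $D^j$ of the monomial $X_1^{j_1}X_2^{j_2}$ equal to a unit times $b^{j_1}(-a)^{j_2}$, hence $D^j(\mathcal{G}_j) \subseteq (a,b)^jA$, completing the equality.

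The step I expect to need the most care is verifying that $\{\mathcal{H}_j\}_{j\in\bN}$ is a \emph{proper} $\bN$-filtration, since Lemma~\ref{jth image ideal} (and Corollary~\ref{primality}) require this. By the discussion preceding the theorem, properness is equivalent to $\widetilde{J}$ being prime, where $J \subseteq C = R[X_1,X_2,Z_1,S_1,S_2]$ is generated by $Z_1 - z$, $S_1 - X_1$, $S_2 - X_2$, and the weighting is $\lambda(X_i)=1$, $\lambda(S_i)=1$, $\lambda(Z_1)=0$, $\lambda(R)=0$. Here $S_1-X_1$ and $S_2-X_2$ are already $\lambda$-homogeneous (both summands have weight $1$), while $Z_1 - z = Z_1 - aX_1 - bX_2$ has top-degree part $\widetilde{Z_1 - z} = -aX_1 - bX_2 = -z$ (weight $1$), the constant-in-$\lambda$ part $Z_1$ being lower. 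Applying Proposition~\ref{top degree ideal} with this one non-homogeneous generator listed last — after checking $\widetilde{Z_1-z} = -(aX_1+bX_2)$ is a nonzerodivisor mod $(S_1-X_1, S_2-X_2)$, which holds because $C/(S_1-X_1,S_2-X_2) \cong R[X_1,X_2,Z_1]$ is a domain and $aX_1+bX_2 \neq 0$ — yields $\widetilde{J} = (S_1-X_1,\,S_2-X_2,\,aX_1+bX_2)$. Finally $C/\widetilde{J} \cong R[X_1][Z_1] = R^{[2]}$ is a domain (eliminate $S_1 = X_1$, $S_2 = X_2$, then $X_2 = -aX_1/b$ over the fraction field, or more carefully use that $(aX_1+bX_2)$ is prime in $R[X_1,X_2]$ since $\gcd(a,b)=1$), so $\widetilde{J}$ is prime. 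With properness in hand the argument above closes.
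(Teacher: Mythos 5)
Your overall route is the same as the paper's: reduce via Lemma~\ref{Wang}(ii) to $DX_1,DX_2\in R$ and $A=R[aX_1+bX_2]$, get the easy inclusion from Lemma~\ref{prule}(i), and get the reverse inclusion by setting up $C=R[X_1,X_2,Z_1,S_1,S_2]$ with the weights you describe, computing $\widetilde{J}$ via Proposition~\ref{top degree ideal}, checking primality, and invoking Corollary~\ref{primality}. Those parts are carried out correctly (including the non-zerodivisor check and the observation that $aX_1+bX_2$ is prime in $R[X_1,X_2]$ because $\gcd_R(a,b)=1$).

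However, there is a genuine gap at the point where you write that ``one can take $s_1=X_1$, $s_2=X_2$ as generators producing the plinth ideal.'' The entire filtration machinery (the definition of $\mathcal{G}_j$, Lemma~\ref{jth image ideal}, Lemma~\ref{induced fil of eta}, and hence Corollary~\ref{primality}) is set up under the standing hypothesis that $I_1=(Ds_1,\dots,Ds_m)A$; in particular, the proof of Lemma~\ref{jth image ideal} uses $\mathcal{F}_1=\sum_l s_l\mathcal{F}_0+\mathcal{F}_0$, which is exactly the statement that the $Ds_l$ generate the plinth ideal. Choosing $s_1=X_1$, $s_2=X_2$ therefore presupposes $I_1=(DX_1,DX_2)A$, which is the $j=1$ case of the theorem and is not obvious: a priori there could be $h\in B$ with $Dh\in A$ but $Dh\notin(a,b)A$. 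The paper devotes a separate direct argument to this base case: writing $u=f_2X_1-f_1X_2$ (irreducible since $\gcd(f_1,f_2)=1$), taking $Dh=P(u)$, deducing $f_1h=X_1P(u)+Q_1(u)$ and $f_2h=X_2P(u)+Q_2(u)$, hence $uh=X_1Q_2(u)-X_2Q_1(u)$, and concluding after splitting off the constant terms of the $Q_i$ that $Dh=P_2(u)f_1-P_1(u)f_2\in(f_1,f_2)A$. You need to supply this (or an equivalent) argument before the filtration apparatus can be legitimately invoked; with it, the rest of your proposal closes as written.
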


\begin{proof}  By (ii) of Lemma \ref{Wang}, there exist $f_1,f_2$ in $R$ such that the following hold.
\begin{itemize}
	\item $DX_1=f_1$ and $DX_2=f_2$,
	\item $A=R[f_2X_1-f_1X_2]~(=R^{[1]})$.
\end{itemize}

\smallskip\noindent
First, we will show that $I_1=(f_1,f_2)A$. It is clear that $(f_1,f_2)A\subseteq I_1.$ For the converse, let $h\in B$ be such that $Dh\in A$. Let $u:=f_2X_1-f_1X_2$. Since $D$ is irreducible, $f_1,f_2$ are mutually coprime and hence $u$ is irreducible in $B$. Let $Dh=P(u)\in R[u]$. Since $D(f_1h-X_1P(u))=0=D(f_2h-X_2P(u))$, we have $$f_1h=X_1P(u)+Q_1(u) \text{ and } f_2h=X_2P(u)+Q_2(u) \text{ for some }Q_1(u),Q_2(u)\in R[u].$$ Therefore, $uh=(f_2h)X_1-(f_1h)X_2=X_1Q_2(u)-X_2Q_1(u)$.

\smallskip\noindent
Let $c_1,c_2\in R$ and $P_1(u),P_2(u)\in R[u]$ be such that $Q_i(u)=c_i+uP_i(u)$ for $i=1,2.$ Then $uh=u(X_1P_2(u)-X_2P_1(u))+(c_2X_1-c_1X_2)$ and hence $u\mid(c_2X_1-c_1X_2)$. 
Let $c_2X_1-c_1X_2=cu$ for some $c\in B$. Then $h=X_1P_2(u)-X_2P_1(u)+c$. It is clear that $c\in R$ and hence $Dh=P_2(u)f_1-P_1(u)f_2\in(f_1,f_2)A$.

\medskip\noindent
Let $C=R[X_1,X_2,Z,S_1,S_2]~(=R^{[5]})$ and $\lambda$ a weighted degree map on $C$ defined by $\lambda(X_1)=\lambda(X_2)=\lambda(S_1)=\lambda(S_2)=1$, $\lambda(Z)=0$. Suppose $J:=(Z-u,S_1-X_1,S_2-X_2)C$. By Proposition \ref{top degree ideal}, $\widetilde{J}=(u,S_1-X_1,S_2-X_2)$. Since $\widetilde{J}$ is a prime ideal of $C$, by Corollary \ref{primality}
$I_j=\underset{i_1+i_2=j}\sum D ^j(X_1^{i_1}X_2^{i_2})A$. Hence, by Lemma \ref{prule}, $I_j=\underset{i_1+i_2=j}\sum {f_1}^{i_1}{f_2}^{i_2}A$. Thus, $I_j={(f_1,f_2)}^{j}A$.
\end{proof}  

\smallskip
\noindent
The following results are immediate corollaries of Theorem \ref{BhD} and Theorem \ref{inice}.

\begin{cor}\label{ufd2}
Let $R$ be a {\rm UFD} and $B=R[X,Y]$. Let $D$ be an irreducible $R$-lnd on $B$ such that $D^2X=D^2Y=0$. Let $A={\rm Ker}(D)$. Then, for each $n \geq 1 $ the following hold.
\begin{enumerate}
	\item [\rm(i)]If $D$ is fixed point free, then $I_n=A$;
	\item [\rm(ii)] If $D$ is not fixed point free, then $I_n$ is generated by $n+1$ elements of $R$ with ${\rm grade}_A(I_n)=2${\small{\small \footnote{for a Noetherian ring $R$ and an ideal $I$ of $R$,  ${\rm grade}_A(I)$ denotes the length of the maximal $R$-regular sequence contained in $I$.}}}, and hence $I_n$ is not principal. 
\end{enumerate}
\end{cor}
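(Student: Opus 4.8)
The plan is to read both statements off Theorem~\ref{inice}, using Theorem~\ref{BhD} (and its consequence Corollary~\ref{2varufd}) to separate the two cases and to pin down the grade. An affine UFD over $k$ is a Noetherian domain containing $k$ and $B=R^{[2]}$, so part~(i) is immediate from Corollary~\ref{2varufd}: if $D$ is fixed point free, then $I_n=A$ for every $n\geq 1$.

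For part~(ii) I would first normalize the data via Lemma~\ref{Wang}(ii): since $D$ is irreducible and nice, $DX=f_1$ and $DY=f_2$ for some $f_1,f_2\in R$, $A=R[f_2X-f_1Y]=R^{[1]}$, and irreducibility of $D$ forces ${\rm gcd}_R(f_1,f_2)=1$. The hypothesis that $D$ is not fixed point free means $(f_1,f_2)B\neq B$; since $f_1,f_2\in R$ this is the same as $(f_1,f_2)R\neq R$, so $(f_1,f_2)A$ is a proper ideal of $A$ and, being coprime in $R$, both $f_1$ and $f_2$ are nonzero. By Theorem~\ref{inice}, $I_n=(f_1,f_2)^nA$, which is the ideal of $A$ generated by the $n+1$ elements $f_1^{\,n},f_1^{\,n-1}f_2,\dots,f_2^{\,n}$ of $R$; this gives the asserted generating set, and in particular $I_n$ is a proper nonzero ideal of $A$.

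It then remains to compute ${\rm grade}_A(I_n)=2$. Since $A=R^{[1]}$ and condition~(ii) of Theorem~\ref{BhD}, namely $(DX,DY)B=B$, fails here, condition~(i) of that theorem must hold: $f_1,f_2$ form a regular sequence in $B=R[X,Y]$. As $f_1,f_2$ lie in $R$ and $B/f_1B=(R/f_1R)^{[2]}$, a nonzerodivisor of a polynomial ring which belongs to the coefficient ring is already a nonzerodivisor in that ring; hence $f_1,f_2$ is an $R$-regular sequence, and the same remark applied to $A=R^{[1]}$ (so $A/f_1A=(R/f_1R)^{[1]}$) shows it is an $A$-regular sequence. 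Consequently $f_1^{\,n},f_2^{\,n}\in(f_1,f_2)^nA=I_n$ is again an $A$-regular sequence (a power of a regular sequence is a regular sequence), so ${\rm grade}_A(I_n)\geq 2$. On the other hand $\sqrt{I_n}=\sqrt{(f_1,f_2)A}$, which has height at most $2$ by Krull's height theorem, so ${\rm grade}_A(I_n)\leq{\rm ht}(I_n)\leq 2$; hence ${\rm grade}_A(I_n)=2$. Finally, a nonzero proper principal ideal $(a)A$ in the domain $A$ has ${\rm ht}((a)A)\leq 1$, hence grade at most $1$; since ${\rm grade}_A(I_n)=2$, $I_n$ cannot be principal.

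The argument presents no real obstacle. The only points demanding a little care are keeping every ideal in play proper — so that grades and heights are finite and the inequality ${\rm grade}\leq{\rm ht}$ applies — which is exactly what the failure of fixed point freeness guarantees, and transferring the regular sequence $f_1,f_2$ freely between $R$, $B$ and $A$, which is harmless since each of $B$ and $A$ is a polynomial ring over $R$ and nonzerodivisor statements about elements of $R$ descend from any such polynomial extension.
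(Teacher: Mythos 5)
Your proposal is correct and follows exactly the route the paper intends: part (i) from Theorem \ref{BhD} via Corollary \ref{2varufd}, and part (ii) by reading the generators $f_1^{i}f_2^{n-i}$ off Theorem \ref{inice} and computing the grade from the regular sequence $f_1,f_2$ supplied by Theorem \ref{BhD}(i). The paper gives no further details, and your filling-in (descending the regular sequence from $B$ to $R$ to $A$, using powers of a regular sequence and ${\rm grade}\leqslant{\rm ht}\leqslant 2$) is sound.
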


\begin{cor}\label{pid}
Let $R$ be a {\rm PID} and $B=R[X,Y,Z]$. Let $D$ be an irreducible $R$-lnd on $B$ such that $D^2X=D^2Y=D^2Z=0$. Let $A={\rm Ker}(D)$. Then, for each $n \geqslant 1 $ the following hold.
\begin{enumerate}
	\item [\rm(i)]If $D$ is fixed point free, then $D$ has a slice and hence $I_n=A$; 
	\item[\rm(ii)]If $D$ is not fixed point free, then $I_n$ is generated by $n+1$ elements with ${\rm grade}_A(I_n)=2$, and hence $I_n$ is not principal.
\end{enumerate}
\end{cor}

\begin{proof} The proof follows from Theorem \ref{Nice} and Theorem \ref{inice}.
\end{proof}

\begin{rem}
\em{Part (i) of Corollary \ref{pid} holds even if  $R$ is a  Dedekind domain. In fact, in this case too $D$ has a slice (see \cite[Proposition 3.8]{DG}).}
		

\end{rem}

\section{Structure of $I_n~(n\geqslant 1)$ for strictly $1$-quasi-nice derivations}
We now turn to the case of strictly $1$-quasi-nice derivations on $R^{[2]}$, where $R$ is a UFD. If the derivation is fixed point free, it follows from Corollary \ref{2varufd} that each image ideal is the whole ring. The following result describes the plinth ideal of a strictly $1$-quasi-nice derivation $D$ on $R[X_1,X_2]$, which is not fixed point free and $D^2X_1=0$ with $DX_1$ irreducible.  
\begin{prop}\label{plinth_quasi} Let $R$ be a {\rm UFD} and $B=R[X_1,X_2]$. Let $D$ be an irreducible $R$-lnd on $B$ which is not fixed point free. Suppose $A={\rm Ker}(D)$. If $D^2X_1=0$ and $DX_1$ is irreducible, then the following are equivalent.
\begin{enumerate}
	\item [\rm (I)] $I_1$ is principal.
	\item[\rm (II)] $I_1=(DX_1)A$.
	\item [\rm (III)] $D$ is strictly $1$-quasi-nice.
	
\end{enumerate}
\end{prop}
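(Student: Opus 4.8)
The plan is to establish the cycle of implications $(\mathrm{I})\Rightarrow(\mathrm{III})\Rightarrow(\mathrm{II})\Rightarrow(\mathrm{I})$, with the implication $(\mathrm{III})\Rightarrow(\mathrm{II})$ doing the real work. First I would set up notation using Lemma \ref{Wang}(i): since $D^2X_1=0$, we have $DX_1=:a\in R$ (and since $D$ is irreducible and not fixed point free, $a$ is a non-unit, here assumed irreducible in $R$), $DX_2=:f\in R[X_1]$, and $A=R[z]$ where $z=bX_2+g(X_1)$ for suitable $b\in R$, $g\in R[X_1]$. Irreducibility of $D$ forces $\gcd(a,f,b)$ to be a unit. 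The implication $(\mathrm{II})\Rightarrow(\mathrm{I})$ is trivial. For $(\mathrm{I})\Rightarrow(\mathrm{III})$ I would argue the contrapositive: if $D$ is not strictly $1$-quasi-nice, then it is $2$-quasi-nice, so after a coordinate change $D^2$ kills both coordinates; then Theorem \ref{inice} (together with Corollary \ref{ufd2}(ii), since $D$ is not fixed point free) shows $I_1$ is generated by two elements of $R$ with $\mathrm{grade}_A(I_1)=2$, hence not principal.

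The heart of the matter is $(\mathrm{III})\Rightarrow(\mathrm{II})$. Here I would first show $(DX_1)A=aA\subseteq I_1$ is automatic, and then that $I_1\subseteq aA$ using strict $1$-quasi-niceness. Suppose for contradiction that there is $h\in B$ with $Dh\in A$ but $Dh\notin aA$. Write $Dh=P(z)\in R[z]$. The idea, paralleling the computation in the proof of Theorem \ref{inice}, is to use that $a h - X_1 P(z)\in\ker D$ and $f\,h - X_2\,P(z)$ has $D$ applied giving $fP(z)-fP(z)=0$ wait—more carefully, $D(ah)=aP(z)$ and $D(X_1 P(z))=aP(z)$, so $ah-X_1P(z)\in A=R[z]$; similarly one gets a relation for the other variable. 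Eliminating $h$ between these relations, and using that $a$ is irreducible in the UFD $R$ and $z$ is a variable of $A$ (so $A/aA\cong (R/aR)^{[1]}$ is a domain once we know $a$ stays prime), one deduces either that $a\mid P(z)$ in $A$ — contradicting the choice of $h$ — or that the leftover obstruction to $a\mid h$ produces a genuine slice-type element forcing $D$ to be $2$-quasi-nice. Concretely, if the elimination shows $a\mid h$ in $B$, write $h=ah'$; then $Dh'=P(z)/a\in A$, and iterating (noetherian, or using that $P(z)$ has finite $a$-divisibility) we eventually find $h_0\in B$ with $Dh_0$ a unit multiple of $a^0=1$ up to something in $aA$, i.e. we produce a new coordinate annihilated by $D^2$ distinct from the span already forced by $X_1$, contradicting strictness.

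The main obstacle I anticipate is the last step: translating "$I_1$ contains an element not divisible by $a$" into "a second independent coordinate is $D^2$-annihilated," i.e. producing the $2$-quasi-nice coordinate system explicitly. I expect this to go through Lemma \ref{Wang}(i) applied in reverse: having $A=R[z]$ and knowing the structure $DX_2=f\in R[X_1]$, an element of $I_1\setminus aA$ gives a new slice-like variable $X_2'$ with $DX_2'\in R$; combined with $DX_1=a\in R$ one checks $D^2X_1=D^2X_2'=0$, and that $(X_1,X_2')$ is a coordinate system by a triangular automorphism of $B$ fixing $X_1$. One must be careful that the change of variables is genuinely a coordinate change of $B=R[X_1,X_2]$ over $R$; this should follow because it is of the form $X_2\mapsto \alpha X_2 + \beta(X_1)$ with $\alpha$ a unit, forced by degree considerations on $D$ and the irreducibility of $D$. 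The UFD hypothesis on $R$ is used precisely to guarantee that $aR$ is prime (as $a$ is irreducible), so that $A/aA$ is a domain and the divisibility arguments in $A=R[z]$ are valid.
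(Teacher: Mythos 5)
Your overall architecture --- the cycle (I)$\Rightarrow$(III)$\Rightarrow$(II)$\Rightarrow$(I), with (II)$\Rightarrow$(I) trivial and (I)$\Rightarrow$(III) by contrapositive using Theorem \ref{inice} and the fact that $DU,DV$ form a regular sequence when $D$ is nice and not fixed point free --- matches the paper and those parts are fine. The problem is that (III)$\Rightarrow$(II), which you correctly identify as the heart of the matter, is not actually proved. Your ``elimination'' dichotomy is circular: in the branch where you conclude $a\mid h$ in $B$ and set $h=ah'$, you immediately get $a\mid Dh=P(z)$, which is precisely the other branch that you claim contradicts the choice of $h$; so the two alternatives collapse and no contradiction with strict $1$-quasi-niceness is ever extracted. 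Likewise, finding $h_0$ with ``$Dh_0$ a unit multiple of $1$ up to something in $aA$'' does not by itself yield a new variable $X_2'$ with $DX_2'\in R$; you flag this translation as the main obstacle, but the ``degree considerations'' you invoke to resolve it are never specified, and that is exactly where the proof lives.

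The missing idea is a reduction modulo $b:=DX_1$ combined with a degree argument in one variable. Write $A=R[bX_2+f(X_1)]$ with $f(0)=0$ and decompose $f=u+bv$ so that no coefficient of $u$ is divisible by $b$ (possible since $b,f$ is a regular sequence); then for $\sigma:B\to B/bB$ one has $\sigma(A)=\bar R[\sigma(u)]$ with $\deg_{X_1}u\geqslant 1$. If some $Dh\in A\setminus bA$, then $\sigma(Dh)=P(\sigma(u))\neq 0$, and the kernel element $X_1Dh-hDX_1$ gives $\sigma(X_1)P(\sigma(u))\in\bar R[\sigma(u)]$. Over the fraction field $K$ of $R/bR$ --- a domain precisely because $b$ is irreducible in the UFD $R$, which is where those hypotheses enter --- this forces $K(\sigma(X_1))=K(\sigma(u))$ and hence $\deg_{X_1}u=1$, say $u=aX_1+c$. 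Then $X_2':=X_2+v(X_1)$ satisfies $DX_2'=-u'(X_1)=-a\in R$, so $(X_1,X_2')$ is a coordinate system of $B$ with $D^2X_1=D^2X_2'=0$, contradicting strictness. Without an argument of this kind pinning $\deg_{X_1}u$ down to $1$, your proposal does not close the implication (III)$\Rightarrow$(II).
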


\begin{proof} Since (II)$\implies$(I) is obvious, it is enough to show that (I)$\implies$(III) and (III)$\implies$(II).

\noindent\medskip
(I) $\Rightarrow$ (III): Suppose that $D$ is a nice derivation. Then, there exists a coordinate system $\{U,V\}$ in $B$ such that $D^2U=D^2V=0$.  By Lemma \ref{Wang}(ii), there exist $p,q \in R$, such that $DU=p$ and $DV=q$. Since $D$ is not fixed point free, by Theorem \ref{BhD} $p,q$ form a regular sequence in $B$ and hence in $A$. So $I_1$ is not principal.

\medskip
\noindent
(III) $\Rightarrow$ (II): Assume (III) holds. By Lemma \ref{Wang}(i), there exist $b\in R$ and $f(X_1)\in{R[X_1]}$ such that the following hold:
\begin{itemize}
	\item $DX_1=b$ and $DX_2=-f^{\prime}(X_1)~(=\dfrac{df}{dX_1})$.
	\item $A=R[bX_2+f(X_1)]~(=R^{[1]})$.
\end{itemize}

\noindent
Without loss of generality, we can assume that $f$ has no constant term. Since $D$ is not fixed point free, it follows from Theorem \ref{BhD} that $b,f^{\prime}$ form a regular sequence in $B$ and hence $b,f$ is a regular sequence in $B$.  So, there exists $~u(X_1),~v(X_1)$ in $R[X_1]$ such that $f(X_1)=u(X_1)+bv(X_1)$, none of the coefficients of $u(X_1)$ is divisible by $b$ and ${\rm deg}_{X_1} u(X_1)\geqslant 1$. Let $\sigma:B\longrightarrow{B/bB}$ be the natural projection map and set $\bar{A}:=\sigma(A), \bar{R}:=\sigma(R)$. Then $\bar{A}=\bar{R}[\sigma(u)]$. 

\smallskip
\noindent We will show that $I_1 \subseteq bA$. Otherwise, there exists $h\in{B}$ be such that $Dh\in{A\setminus bA}$. Since $bB\cap A=bA$, $\sigma(Dh)\neq 0$. Let $\sigma(Dh)=P(\sigma(u))\in{\bar{R}[\sigma(u)]}$. Since $X_1Dh-hDX_1\in{A}$, there exists $0\neq Q(\sigma(u))$ such that $\sigma(X_1)P(\sigma(u))=Q(\sigma(u))$. Let $K$ be the field of fractions of the integral domain $R/bR$. Then, $K(\sigma(X_1))=K(\sigma(u))$. Then ${\rm deg}_{\sigma(X_1)}(\sigma(u))=1$ and hence ${\rm deg}_{X_1}(u)=1$. Let $u(X_1)=aX_1+c$ for some $a,c\in R$. Setting  $X_1^\prime=X_1$ and $X_2^\prime=X_2+v(X_1)$ we get $D^2X_1^\prime=D^2X_2^\prime=0$, contradicting the fact that $D$ is strictly $1$-quasi-nice.
\end{proof}

Now, we describe the higher image ideals of $D$ under the hypotheses of Proposition \ref{plinth_quasi}.
\begin{thm}\label{2varquasi}
Let $R$ be a {\rm UFD} and $B=R[X_1,X_2]$. Let $D$ be an irreducible $R$-lnd on $B$, which is not fixed point free. Let $A={\rm Ker}(D)$. Suppose that the following conditions hold.
\begin{enumerate}
	\item[\rm(i)]$DX_1~(=b)\in R$ is irreducible,
	\item[\rm(ii)]$DX_2=-f^{\prime}(X_1)$ where $f(X_1)=\overset{m}{\underset{i=0}{\sum}} f_i{X_1}^i$, $f_m\neq 0$,
	\item[\rm(iii)]$D$ is strictly $1$-quasi-nice. 
\end{enumerate}
Let $d={\rm Max}\{i\in\bN:i\leqslant m \text{ and } b\nmid f_i\}$
Then, $d\geqslant 2$ and for each $j\geqslant 2$, $I_j=b^{m_j}\bar{I_j}$, where $m_j={\rm Min}\{i_1+(d-1)i_2:i_1,i_2\in\bN,i_1+di_2=j\}$, $\bar{I_j}={(b,f_d)}^{l_j}A$ and $l_j=[j/d].$

\end{thm}
\begin{proof}

\noindent
It is clear that we can assume $f_0=0$. Since $D$ is irreducible, $\{i\in\bN:i\leqslant m \text{ and } b\nmid f_i\}$ is a non-empty set. Let $f(X_1)=u(X_1)+bv(X_1)$, where none of the coefficients of ${X_1}^i$ in $u(X_1)$ is divisible by $b$ . Set $Y_1=X_1$ and $Y_2=X_2+v(X_1)$. Then $DY_1=b$ and $DY_2=-u^{\prime}(Y_1)$. It is clear that $f_d$ is the leading coefficient of $u(Y_1)$. Since $D$ is not nice, $d\geqslant 2$.


\smallskip\noindent   
Let $j\geqslant 2$, $C=R[Y_1,Y_2,Z,S]$ and $\lambda$ a weighted degree map on $C$ defined by $\lambda(Y_1)=1,\lambda(Y_2)=d,\lambda(S)=1$ and $\lambda(Z)=0$. Suppose $J:=(Z-bY_2-u(Y_1),S-Y_1)C$. By Proposition \ref{top degree ideal}, $\widetilde{J}=(bY_2+f_dY_1^d,S-Y_1)$. Since ${\rm gcd}(b,f_d)=1$, $\widetilde{J}$ is a prime ideal of $C$ and hence by Corollary \ref{primality}, we have 
$I_j=\underset{i_1+di_2=j}\sum D ^j(Y_1^{i_1}Y_2^{i_2}) A$ and hence by Lemma \ref{prule}, $I_j=\underset{i_1+di_2=j}\sum {(DY_1)}^{i_1}{(D^dY_2)}^{i_2}A$. We note that $D^dY_2=D^{d-1}(-u^{\prime}(Y_1))=-d!b^{d-1}f_d$. Thus, $I_j=\underset{i_1+di_2=j}\sum b^{i_1+(d-1)i_2}{f_d}^{i_2}A=b^{m_j}\bar{I_j}$, where $\bar{I_j}=\underset{i_1+di_2=j}\sum b^{i_1+(d-1)i_2-m_j}{f_d}^{i_2}A$. Let $S_j=\{(i_1,i_2):i_1,i_2\in\bN, i_1+di_2=j\}$. Since for any $(i_1,i_2)\in S_j$, $i_1+(d-1)i_2=j-i_2$, we have $m_j=j-l_j$. Hence $\bar{I_j}=\overset{l_j}{\underset{i_2=0}\sum} b^{l_j-i_2}{f_d}^{i_2}A$.
\end{proof}

%
%

\begin{cor}

In {\rm Theorem \ref{2varquasi}}, if $ (b,f_d)A=A$, then $I_j=b^{m_j}A$ for each $j\geqslant 1$.

\end{cor}

\begin{cor}
In {\rm Theorem \ref{2varquasi}}, if $1\leqslant j<d$, then $I_j=b^{j}A$.
\end{cor}

\begin{proof}
If $1\leqslant j<d$, then $l_j=0$ and hence $m_j=j$.  
\end{proof}
\begin{rem}
\em{Let $R$ be a PID, $D$ an irreducible and not fixed point free $R$-lnd on $R[X_1,X_2]$. Then, by Lemma \ref{Wang}(iii), $D$ is quasi-nice if and only if it is strictly $1$-quasi-nice.}
\end{rem}

\indent
The next proposition shows that we can remove the condition ``$DX_1$ is irreducible'' from Proposition \ref{plinth_quasi} when $R$ is a DVR.
\begin{prop}\label{plinth_quasi_DVR} Let $(R,p)$ be a {\rm DVR} with parameter $p$ and $B=R[X_1,X_2]$. Let $D$ be an irreducible $R$-lnd on $B$, which is not fixed point free.  Suppose $A={\rm Ker}(D)$. If $DX_1 \in R$ and $DX_2=-f^{\prime}(X_1)$, where $f(X_1)$ is a polynomial in $X_1$ with degree $d$ and leading coefficient $f_d$, then we have the following.

\begin{enumerate}
	\item [\rm(i)]	$I_1=(DX_1)A$.
	\item [\rm(ii)] If $p\nmid f_d$, then for each $j\geqslant 1$, $I_j={(DX_1)}^{m_j}A$, where $m_j={\rm Min}\{i_1+(d-1)i_2:i_1,i_2\in\bN,i_1+di_2=j\}$.
\end{enumerate}

\end{prop}

\begin{proof} Let $DX_1=b$. By Lemma \ref{Wang}(i), $A=R[F]~(=R^{[1]})$, where $F=bX_2+f(X_1)$.

\noindent
Without loss of generality, we can assume that $f$ has no constant term. Since $D$ is not fixed point free, we can assume that $b=p^n$ with $n\geqslant 1$. It follows from Theorem \ref{BhD} that $b,f^{\prime}$ form a regular sequence in $B$ and hence $b,f$ form a regular sequence in $B$. Then $p,f$ also form a regular sequence in $B$. So, there exist $u(X_1),~v(X_1)$ in $R[X_1]$ such that $f(X_1)=u(X_1)+pv(X_1)$, none of the coefficients of $u(X_1)$ is divisible by $p$ and ${\rm deg}_{X_1} u(X_1)\geqslant 1$. Let $\sigma:B\longrightarrow{B/pB}$ be the natural projection map and set $\bar{A}:=\sigma(A), \bar{R}:=\sigma(R)$. Then $\bar{A}=\bar{R}[\sigma(u)]$. 

\medskip\noindent
(i): Let $h\in B$ be such that $Dh\in I_1$. Then $Dh={p}^{m}g$ where $g\in A\setminus pA$ and $m\geqslant 0$. If $m\geqslant n$, then $Dh\in bA$. If possible, suppose that $m<n$. Since $X_1Dh-bh\in A$, we have $X_1g-p^{n-m}h\in A$. Let $\sigma(g)=P(\sigma(u))\in{\bar{R}[\sigma(u)]}$. Since $\sigma(g)\neq 0$, there exists $ Q(\sigma(u))~(\neq 0)$ such that $\sigma(X_1)P(\sigma(u))=Q(\sigma(u))$.  Then, $\frac{R}{pR}(\sigma(X_1))=\frac{R}{pR}(\sigma(u))$ and hence $\frac{R}{pR}[\sigma(X_1)]=\frac{R}{pR}[\sigma(u)]$.  So, $B\otimes_R\frac{R}{pR}=\frac{R}{pR}[\sigma(F)]^{[1]}$, where $F=bX_2+f(X_1)$. Again, $B\otimes_RR[\frac{1}{p}]=R[\frac{1}{p}][F,X_1]$.  Hence, $F$ is a residual coordinate in $R[X_1,X_2]$ and hence a coordinate in $R[X_1,X_2]$ (see \cite[Theorem 3.2]{BD1}).
So, $D$ has a slice in $B$, contradicting the fact that $D$ is not fixed point free. 

\medskip\noindent
(ii): Let $C=R[X_1,X_2,Z,S]$ and $\lambda$ a weighted degree map on $C$ defined by $\lambda(X_1)=1,\lambda(X_2)=d,\lambda(S)=1$ and $\lambda(Z)=0$. Suppose $J:=(Z-p^nX_2-f(X_1),S-X_1)C$. By Proposition \ref{top degree ideal}, $\widetilde{J}=(p^nX_2+f_dX_1^d,S-X_1)$. Since $f_d\in R^{*}$, $\widetilde{J}$ is a prime ideal of $C$ and hence by Corollary \ref{primality}, we have $I_j=\underset{i_1+di_2=j}\sum D ^j(X_1^{i_1}X_2^{i_2}) A$. Following the same arguments given in Theorem \ref{2varquasi}, we have for each $j\geqslant 2$, $I_j=b^{m_j}\bar{I_j}$, where $m_j={\rm Min}\{i_1+(d-1)i_2:i_1,i_2\in\bN,i_1+di_2=j\}$, $\bar{I_j}={(b,f_d)}^{l_j}A$ and $l_j=[j/d].$ Since $(b,f_d)R=R$, $\bar{I_j}=A$ and hence $I_j=b^{m_j}A$.
\end{proof}

\begin{rem}\label{degree one}
{\em In Proposition \ref{plinth_quasi_DVR}, if $DX_1=p$, then condition $p\nmid f_d$ can be obtained by making a suitable coordinate change and using the fact that $D$ is not nice.
}
\end{rem}

Theorem \ref{plinth_quasi_PID} shows that Proposition \ref{plinth_quasi_DVR} can be extended to a PID $R$. First, we prove an elementary lemma.

\begin{lem}\label{local ff}
Let $R$ be a {\rm PID} containing $\bQ$, $B=R[X_1,X_2]$ and $D$ be an $R$-lnd on $B$ satisfying the following conditions.

\begin{enumerate}
	\item [\rm(i)] $DX_1=p^n$ for some prime element $p$ of $R$ and a positive integer $n$,
	\item [\rm(ii)] $DX_2=-f^{\prime}(X_1)$, where $f(X_1)=\overset{d}{\underset{i=0}{\sum}} f_iX^i, f_d\neq 0$.
\end{enumerate} 
Then, $D$ is fixed point free if and only if $p\mid f_j$ for all $j\geqslant 2$ and $p\nmid f_1$.
\end{lem}

\begin{proof}
Let $D:B\longrightarrow B$ be an $R$-lnd satisfying (i) and (ii). Then, $D$ is fixed point free if and only if $(p^n, f^{\prime}(X_1))B=B$. It is easy to observe that $(p^n, f^{\prime}(X_1))B=B$ if and only if $p\mid f_j$ for all $j\geqslant 2$ and $p\nmid f_1$.
\end{proof}

\begin{thm}\label{plinth_quasi_PID}
Let $R$ be a {\rm PID}, $B=R[X_1,X_2]$ and $D$ an irreducible $R$-lnd on $B$ satisfying the following conditions.

\begin{enumerate}
	\item [\rm(i)] $DX_1=\underset{i\in I}\prod{p_i}^{r_i}~(\in R)$, where $I=\{1,2,\dots,n\}$ and for each $i\in I$, $p_i$ is a prime element in $R$,
	\item[\rm(ii)] $DX_2=-f^{\prime}(X_1)$, where $f(X_1)=\overset{d}{\underset{j=0}{\sum}} f_jX^j, f_j\in R~(0\leqslant j\leqslant d) \text{ and }f_d\neq 0$.
\end{enumerate}
Let $A={\rm Ker}(D)$ and  $J=\{i\in I:p_i\mid f_j \text{ for all }j=2,\dots,d \text{ and }p_i\nmid f_1\}$. Then following hold.

\begin{enumerate}
	\item[\rm(I)] $I_1=
	\begin{cases}
		({\underset{i\in I\setminus J}\prod}{p_i}^{r_i})A, & \text{if } J\neq I;\\
		A, & \text{if } J=I.
	\end{cases}
	$
	\item[\rm(II)]  If for each $i\in I$, $p_i\nmid f_d$, then for each integer $j\geqslant 1$, $I_j=(DX_1)^{m_j}A$, where $m_j={\rm Min}\{i_1+(d-1)i_2:i_1,i_2\in\bN,i_1+di_2=j\}$.
	
	
\end{enumerate}
\end{thm}

\begin{proof} 
Let $D_{p_i}$ be the natural extension of $D$ to the $R_{(p_i)}$-lnd on $R_{(p_i)}[X_1,X_2]$. Suppose ${I_j}({i})$ denote the $j$-th higher image ideal of $D_{p_i}$ for $1\leqslant i\leqslant n$ and $j\geqslant 1$. Then $I_j(i)=I_jA_{(p_i)}$, where $A_{(p_i)}$ is the localisation of $A$ under the multiplicatively closed set $R\setminus p_iR$.

\medskip\noindent
Let $j$ be a positive integer. By Lemma \ref{local ff}, for each $i\in I\setminus J$ the $R_{(p_i)}$-lnd $D_{p_i}$ is not fixed point free and hence by Proposition \ref{plinth_quasi_DVR}, for each $i\in I\setminus J$, $I_jA_{(p_i)}=({p_i}^{r_i})^{m_j}A_{(p_i)}$, where $m_j={\rm Min}\{i_1+(d-1)i_2:i_1,i_2\in\bN,i_1+di_2=j\}$; and $I_jA_{(p_i)}=A_{(p_i)}$ if $i\in J$. Let $\m$ be a maximal ideal of $A$ and $\p=\m\cap R$. 
If $p_i\notin\p$ for all $i$, then $DX_1$ is a unit in $A_{\m}$ and hence $I_jA_\m=A_\m$ by Lemma \ref{ii_slice}. 
If there exists $i$ such that $p_i\in \p$, then $\p=p_iR$ and hence $I_jA_\m~(=(I_jA_\p)A_\m)=
\begin{cases}  
	({p_i}^{r_i})^{m_j}A_\m, & \text{if } i\in I\setminus J;\\
	A_\m, & \text{if } i\in J. 
\end{cases}
$\\
\noindent {\bf Case-1:} $J=I$.
Since $I_j={\underset{\m\in {\rm maxSpec}(A)}\bigcap} I_jA_\m$, we have $I_j=\underset{i\in I}\bigcap A_\m=A$. 

\noindent {\bf Case-2:} $J\neq I$. Then $I_j={\underset{\m\in {\rm maxSpec}(A)}\bigcap} I_jA_\m= {\underset{i\in I\setminus J}\bigcap} ({p_i}^{r_i})^{m_j}A_\m$.
Since $I_j\subseteq A$ and $({p_i}^{r_i})^{m_j}A_\m\cap A=({p_i}^{r_i})^{m_j}A$ for each $i\in I$, we have $I_j={\underset{i\in I\setminus J}\bigcap}({p_i}^{r_i})^{m_j}A$. Since $A$ is a UFD, $I_j=({\underset{i\in I\setminus J}\prod}{p_i}^{r_i})^{m_j}A$.

\medskip\noindent
Now, (I) can be obtained by taking the positive integer $j$ to be $1$. (II) follows from the fact that if for each $i\in I$, $p_i\nmid f_d$, then $J=\emptyset$.
\end{proof}

\begin{rem} \em{We observe the following facts.
	
	\begin{enumerate}
		\item In Theorem \ref{plinth_quasi_PID},	if $r_i=1$ for all $i\in I$ then for each integer $j\geqslant 1$, $I_j=({\underset{i\in I\setminus J}\prod}{p_i})^{m_j}A$, where $m_j={\rm Min}\{i_1+(d-1)i_2:i_1,i_2\in\bN,i_1+di_2=j\}$.
		
		\begin{proof}
			Follows from Remark \ref{degree one} and part (II) of Theorem \ref{plinth_quasi_PID}.
		\end{proof}
		
		\item In the setup of Theorem \ref{plinth_quasi_PID}, if  for each $i\in I$ the induced $R_{(p_i)}$-lnd $D_{p_i}$ is fixed point free (i.e., $J=I$), then $D$ is fixed point free.
	\end{enumerate}}

\end{rem}

 \noindent The following example shows that in Proposition \ref{plinth_quasi_DVR}(ii), if we remove the condition $p\nmid f_d$, then $I_j$ may not be generated by $(DX_1)^{m_j}$.
	
	\begin{ex}\label{ex}
{\em
	Let $R=k[T]_{(T)}$, $B=R[X_1,X_2]$ and $D$ be an $R$-lnd on $B$ defined by $DX_1=T^3$ and $DX_2=-f^{\prime}(X_1)$ where $f(X_1)=T^{2}X_1^4+TX_1^{3}+X_1^2+TX_1$. Then $D$ is an irreducible $R$-lnd which is not fixed point free. Let $F=T^3X_2+f(X_1)$. Then $A:={\rm Ker}(D)=R[F]$. 
	
	\smallskip\noindent 
Let $S_j=\{(i_1,i_2):i_1,i_2\in\bN, i_1+4i_2=j\}$.	Then $S_3=\{(3,0)\}$ and $m_3={\rm Min}\{i_1+3i_2:i_1,i_2\in\bN,i_1+4i_2=3\}=3$. It is easy to see that $D^{3}(TX_2+X_1^4)=-6T^8$. So $T^8\in I_{3}$ and hence $I_3$ can't be generated by $(DX_1)^{m_3}=T^9$.
	
}
\end{ex}

\noindent With Example \ref{ex} in mind, we pose the following question.

\smallskip
\begin{ques}\label{Q1}
\it{Is $I_3= T^8A?$}
\end{ques}

\begin{center}
{\bf Acknowledgement}
\end{center}
\noindent
This research is supported by the Indo-Russia Project DST/INT/RUS/RSF/P-48/2021 with TPN 64842.

{\small{
	
	}}

\end{document}